\def\ov{\stackrel}
\newcommand{\lra}{\longrightarrow}
\DeclareMathOperator*{\so}{so}
\DeclareMathOperator*{\spin}{spin}
\newcommand{\Ker}{{\rm Ker}}
\newcommand{\Hom}{{\rm Hom}}
\newcommand{\Scal}{{\rm Scal}}
\newcommand{\tr}{\operatorname{tr}}
\newcommand{\tmf}{{\rm tmf}}
\newcommand{\Imm}{{\rm Im}}
\newcommand{\Ric}{{\rm Ric}}
\newcommand{\Fbr}{{\rm Fivebrane}}
\newcommand{\str}{{\rm string}}
\DeclareMathAlphabet{\mathpzc}{OT1}{pzc}{m}{it}
\def\p{\partial}
\def\st{\stackrel}
\def\R{\mathbf R}
\def\P{\mathbf P}
\def\H{\mathbf H}
\def\Ca{{\mathbb C}{\mathbf a}}
\def\Z{{\mathbf Z}}
\def\C{{\mathbf C}}
\def\Q{{\mathbf Q}}
\def\rh{\hookrightarrow}
\newtheorem{theorem}{Theorem}[section]
\newtheorem{corollary}[theorem]{Corollary}
\newtheorem{lemma}[theorem]{Lemma}
\newtheorem{proposition}[theorem]{Proposition}
\newtheorem*{thm-a}{Theorem A}
\newtheorem*{thm-b}{Theorem B}
\newtheorem*{thm-b'}{Theorem B$'$}
\newtheorem*{thm-c}{Corollary B}
\newtheorem*{thm-c'}{Theorem A$'$}
\newtheorem*{conj-d}{Conjecture D}
\newtheorem*{conj-c}{Conjecture C}
\newtheorem*{conj-s}{Stolz Conjecture (1996)}
\newtheorem{thm}{Theorem}[section]
\theoremstyle{definition}
\newtheorem*{remark}{Remark}
\newtheorem{remarks}[thm]{Remarks}
\begin{document}

\author{Boris Botvinnik} 
\author{Mohammed Labbi} 
\title{Highly connected manifolds of positive 
$p$-curvature}
\date{}

\subjclass[2010]{Primary 53C20, 57R90; Secondary 81T30.}
\keywords{positive scalar curvature, positive
    $p$-curvature, positive Gauss-Bonnet curvature, surgery, string
    manifolds, Fivebrane manifolds, Witten genus,
    $BO\!\left<\ell\right>$ cobordism.}
\maketitle
\begin{abstract} 
  We study and in some cases classify highly connected manifolds which
  admit a Riemannian metric with positive $p$-curvature. The
  $p$-curvature was defined and studied by the second author in
  \cite{Labbi1,Labbi1a, Labbigroups}. It turns out that the  positivity of
  $p$-curvature could be preserved under surgeries of codimension
  at least $p+3$. This gives a key to reduce a geometrical classification
  problem to a topological one, in terms of relevant bordism groups
  and index theory. In particular, we classify $3$-connected manifolds
  with positive $2$-curvature in terms of the bordism groups
  $\Omega^{\spin}_*$, $\Omega^{\str}_*$, and by means of
  $\alpha$-invariant and Witten genus $\phi_W$. Here we use results
  from \cite{Dessai}, which provide appropriate generators of the ring
  $\Omega^{\str}_*\otimes\Q$ in terms of ``geometric
  $\Ca\P^2$-bundles'', where the Cayley projective
    plane $\Ca\P^2$ is a fiber and the structure group is $F_4$ which is
    the isometry group of the standard metric on $\Ca\P^2$.
\end{abstract}
 
\tableofcontents

\section{Introduction and statement of the main results}\label{intr} 
\subsection{Positive scalar curvature} There is a fundamental result
due to Gromov and Lawson \cite{GL1}, Schoen and Yau \cite{Schoen-Yau}
known as the ``Surgery Theorem''.  It shows that positivity of the scalar
curvature can be preserved under surgery of codimension at least
three.  In particular, the surgery technique provides a key to
classifying  simply-connected manifolds admitting a metric with positive
scalar curvature, \cite{GL1}, \cite{Stolz1}. These are the results:
 \begin{theorem}\label{GL-thm1}
{\rm \cite[Theorem A]{GL1}} Let $M$ be a compact  non-spin
simply-connected manifold with $\dim M=n\geq 5$. Then
$M$ always admits a metric $g$ with positive scalar curvature.
  \end{theorem}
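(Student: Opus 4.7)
My plan is to reduce the geometric statement to a topological one about oriented cobordism, using the Gromov--Lawson--Schoen--Yau Surgery Theorem. That theorem asserts that positive scalar curvature is preserved under any surgery of codimension $\geq 3$; equivalently, if $M$ is obtained from $N$ by attaching handles of index $\leq n-2$, then psc on $N$ implies psc on $M$. In the simply-connected range $n\geq 5$, Smale's handle-trading lets us upgrade this to a bordism principle: if $M$ and $N$ are simply connected and cobordant through a cobordism admitting the right handle structure, then psc on $N$ implies psc on $M$.

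It therefore suffices to prove two things. First, that the oriented bordism class $[M]\in\Omega^{\mathrm{SO}}_n$ admits a psc representative $N$. Second, that a cobordism between $M$ and $N$ can be chosen simply connected and with all handles of index in $[2,n-2]$ (read from $N$ to $M$). For the first point, I would invoke the classical generators of $\Omega^{\mathrm{SO}}_*$: modulo $2$-torsion the ring is polynomial on the classes of $\C\P^{2k}$, which carry their Fubini--Study metrics of positive sectional curvature, and the remaining $2$-torsion classes can be realised by further psc manifolds built from $\H\P^k$, the Milnor hypersurfaces $H_{i,j}\subset \C\P^i\times \C\P^j$, and their products and connected sums.

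For the second point, I would first kill $\pi_1$ of the cobordism $W^{n+1}$ by interior surgeries along embedded circles, which is possible because $\dim W\geq 6$. Smale handle-trading then rearranges the decomposition to eliminate $0$-, $1$-, $n$- and $(n+1)$-handles. The remaining obstruction is to trade away $(n-1)$-handles, which would correspond to codimension-$2$ surgeries from the $N$-side and are not controlled by the Surgery Theorem. It is here that the non-spin hypothesis $w_2(M)\neq 0$ becomes decisive: the nontrivial class $w_2$ furnishes embedded $2$-spheres with trivial normal bundle along which the required handle exchanges can be carried out.

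The main obstacle is exactly this final handle-trading step in the non-spin setting. In the spin case the analogous argument genuinely fails, and Stolz later identified the $\alpha$-invariant as the true psc obstruction there; here the non-spin hypothesis plays the complementary role of guaranteeing unobstructed handle trades. Once this is achieved, repeated application of the Surgery Theorem, one handle at a time, transports the psc metric from $N$ across $W$ to $M$, completing the proof.
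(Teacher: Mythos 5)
The paper states this result as a cited theorem (Theorem~A of \cite{GL1}) and does not include its own proof; the closest internal analogue is Proposition~\ref{theorem1bis}, which plays precisely the role that your $w_2$-step is meant to play, with $p_1/2$ in place of $w_2$. Your overall strategy --- Surgery Theorem, reduction to an oriented bordism statement, psc representatives of the generators of $\Omega^{\mathrm{SO}}_*$, and handle trading using simple-connectivity and the non-spin condition --- is indeed the Gromov--Lawson argument and the template this paper generalizes.

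However, your crucial sentence, ``the nontrivial class $w_2$ furnishes embedded $2$-spheres with trivial normal bundle along which the required handle exchanges can be carried out,'' has the logic backwards, and as written this step would fail. A $2$-sphere on which $w_2$ is nonzero has \emph{non}-trivial normal bundle and cannot be killed by surgery. The correct mechanism is the opposite: one first surgers the interior of $W^{n+1}$ (after making it simply connected) along embedded $2$-spheres on which $w_2$ \emph{vanishes} --- those are the ones with trivial normal bundle once $\dim W\geq 6$ --- thereby killing the kernel of $w_2\colon \pi_2(W)\to \Z/2$, so that $\pi_2(W)\cong H_2(W)$ injects into $\Z/2$. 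The hypothesis $w_2(M)\neq 0$ is then used not to produce framed spheres but to guarantee that the composite $\pi_2(M)\to\pi_2(W)\to\Z/2$ is onto, which forces $H_2(M)\to H_2(W)$ to be surjective and hence $H_2(W,M)=0$. By Lemma~\ref{lemma2} of the present paper (or by direct handle rearrangement) this eliminates the index-$2$ handles from the $M$-side, i.e.\ the forbidden codimension-$2$ surgeries from the $N$-side. In short: non-spinness does not supply surgerable spheres in $W$; it ensures that the single irremovable $\Z/2$ left in $\pi_2(W)$ is already carried by $M$. With that correction the sketch is sound, and it is exactly what the authors mimic for higher connectivity in Propositions~\ref{theorem1bis} and~\ref{theorem1bisbis}.
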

Let $ \alpha :
  \Omega^{\spin}_n\to KO_n $ be the Atiyah-Bott-Shapiro homomorphism
  which evaluates the index of the Dirac
  operator on a spin manifold $M$ representing a cobordism class
  $[M]\in \Omega^{\spin}_n$.
  \begin{theorem}\label{GL-thm2}
{\rm \cite[Theorem B]{GL1}, \cite{Stolz1}} Let $M$ be
a compact spin simply-connected manifold with $\dim
M=n\geq 5$. Then $M$ admits a metric $g$ with positive scalar
curvature if and only if $\alpha([M])=0$ in the group $KO_n$.
  \end{theorem}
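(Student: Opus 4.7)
The plan is to address the two implications separately. \emph{Necessity} is the Lichnerowicz-Hitchin argument: apply the Weitzenbock formula $D^2 = \nabla^*\nabla + \tfrac{1}{4}\Scal_g$ to the Clifford-linear Dirac operator whose $KO_n$-valued index is $\alpha([M])$. Positivity of $\Scal_g$ kills all harmonic spinors and forces $\alpha([M]) = 0$.

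For \emph{sufficiency} I would follow the three-step Gromov-Lawson-Stolz program. First, the Gromov-Lawson-Schoen-Yau Surgery Theorem shows that positive scalar curvature is preserved under any surgery of codimension at least $3$. Second, when $M$ and $N$ are simply-connected spin manifolds of dimension $n \geq 5$ representing the same class in $\Omega^{\spin}_n$, a Smale-type handle-trading argument rearranges any spin cobordism between them into a sequence of handles of index $k \in \{2, \ldots, n-2\}$, each corresponding to a surgery of codimension $n - k + 1 \geq 3$. Hence the existence of a PSC metric depends only on the spin cobordism class. Third, one must exhibit a PSC representative in every class of $\ker \alpha$.

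This last step -- \emph{Stolz's theorem} -- is the crux. It asserts that $\ker\bigl(\alpha : \Omega^{\spin}_n \to KO_n\bigr)$ is generated by total spaces of fiber bundles $\H\P^2 \to E \to B$ with structure group $\mathrm{PSp}(3)$, the isometry group of the Fubini-Study metric on $\H\P^2$. Each such total space carries a PSC metric by a standard submersion construction: choose a connection and the product-type metric $g_B + \varepsilon^2 g_{\H\P^2}$; the O'Neill formulas give total scalar curvature of order $\varepsilon^{-2} \Scal_{\H\P^2}$, which is positive for $\varepsilon$ small, uniformly in the base. Combined with the reduction steps, this delivers PSC on any simply-connected spin $M$ with $\alpha([M]) = 0$. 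The main obstacle in the whole argument is precisely Stolz's identification of $\ker \alpha$, which requires a prime-by-prime comparison of the $B\mathrm{PSp}(3)$-bordism transfer into $\Omega^{\spin}_*$ against the Anderson-Brown-Peterson splitting of $\mathrm{MSpin}$; once this deep topological input is accepted, the geometric assembly is essentially routine.
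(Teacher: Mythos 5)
This theorem is stated in the paper as a cited background result from \cite{GL1} and \cite{Stolz1}; the paper gives no proof of its own, so there is no internal argument to compare against. Your outline correctly reproduces the standard three-ingredient Gromov--Lawson--Stolz program exactly as it appears in those references: the Lichnerowicz--Hitchin vanishing argument for necessity, the surgery theorem plus handle-trading reduction to spin-bordism invariance (which the paper later packages as Lemma \ref{killing}, Lemma \ref{lemma2}, and the $r=1$, $\ell=4$ case of Proposition \ref{theorem1}), and Stolz's identification of $\ker\alpha$ with the image of the geometric $\H\P^2$-bundle transfer, together with the O'Neill shrinking-fiber construction of PSC on such total spaces (Theorem \ref{stolz} and the discussion following it). Your phrasing ``generated by total spaces'' is slightly loose --- the precise statement is that $\ker\alpha$ equals the image of the transfer $T$, i.e.\ every class in the kernel is itself represented by such a total space --- but the content is right, and your identification of the Anderson--Brown--Peterson splitting of $\mathrm{MSpin}$ as the deep topological input behind Stolz's theorem is accurate.
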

  It turns out that there are many other Riemannian invariants that
  are also stable under some type of surgeries, see, for example,
  \cite{ADH,BD,MPU,Petean,Wraith}.  Among such invariants are
  $p$-curvature $s_p$ and the second Gauss-Bonnet curvature which were
  studied by the second author, see \cite{Labbi1,Labbi2}.
\subsection{Positive $p$-curvature}
Let $(M,g)$ be a Riemannian manifold, and $TM$ be the tangent bundle.
We denote by $G_p(TM)$ the bundle of Grassmanians of $p$-dimensional
subspaces of the tangent bundle $TM$. Then the $p$-curvature $s_p$ is
defined as a function $s_p: G_p(TM)\to \R$ as follows. For a
$p$-dimensional space $V\in G_p(TM_x)$, the value of $s_p(V)$ is a
``partial trace'' of the curvature tensor, along all directions,
perpendicular to the subspace $V\subset TM_x$, see \cite{Labbi1} and
section \ref{section3} for details.  The curvature
$s_0$ is nothing but the scalar curvature $\Scal$, furthermore, the
function $s_{p-1}$ can be thought as an appropriate trace of $s_p$.
In particular, positivity of $s_p$ implies positivity of the
curvatures $s_j$ for all $j<p$, including the scalar curvature. It
turns out that positivity of the $p$-curvature $s_p$ is also stable
under surgeries of codimension at least $3+p$, see \cite[Main
Theorem]{Labbi1}.

The surgery result \cite[Main Theorem]{Labbi1} gives an appropriate
setup to classifying manifolds admitting a metric with positive
$p$-curvature for $p\geq 1$ similar to the case of positive scalar
curvature. The first interesting case is when $p=1$. Then the
curvature $s_1$ coincides (up to a factor 2) with the quadratic form
associated to the $2$-tensor $S$ defined by
$$
\begin{array}{c}
S_{ij}=\frac{1}{2}\Scal \cdot g_{ij} - \Ric_{ij}.
\end{array}
$$
The tensor $-S$ is also known as the {\sl Einstein tensor}, and the
1-curvature $s_1$ is called the {\sl Einstein curvature}, see
\cite{Labbi1} and \cite{Labbi1a}. We notice that
$$
\begin{array}{c}
  \tr S = \frac{(n-2)}{2} \Scal \ .
\end{array}
$$
Thus positivity of the tensor $S$ is the same as positivity of the
curvature $s_1$, and positivity of $s_1$ implies positivity of the
scalar curvature for $n\geq 3$. 

An interesting case here is when the manifolds in question are
$2$-connected. Then such manifolds are necessarily spin-manifolds, and
the relevant cobordism group is $\Omega^{\spin}_n$. Here is the
classification result analogous to Theorem \ref{GL-thm2}:
\begin{theorem}\label{Labbi-thm2} {\rm
      \cite[Theorem I]{Labbi1}} Let $M$ be a compact $2$-connected manifold
    with  $\dim M=n\geq 7$. Then $M$ admits a metric
    $g$ with positive $1$-curvature if and only if $\alpha([M])=0$ in
    the group $KO_n$.
  \end{theorem}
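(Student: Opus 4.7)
The plan is to imitate the Gromov--Lawson--Stolz proof of Theorem~\ref{GL-thm2}, substituting the scalar-curvature surgery theorem by the codimension-$(p+3)$ surgery theorem of \cite[Main Theorem]{Labbi1} at $p=1$, and checking that Stolz's geometric representatives of $\ker\alpha$ can be made to carry the stronger condition $s_1>0$. The necessity is immediate: from $\tr S=\tfrac{n-2}{2}\Scal$ and $S>0$ one has $\Scal>0$ for $n\geq 3$, and then the Lichnerowicz--Atiyah--Singer identity forces $\alpha([M])=0$ in $KO_n$.

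For the sufficiency I would first construct $s_1>0$ representatives of the generators of $\ker\alpha$. By Stolz, the ideal $\ker\alpha\subset\Omega^{\spin}_*$ is generated (away from a small list of torsion classes, which are handled directly via spheres and elementary constructions) by total spaces $E\to B$ of fibre bundles with fibre the standard $\H\P^2$ and structure group $PSp(3)$. Since $(\H\P^2,g_{\mathrm{std}})$ has strictly positive sectional curvature, and since Labbi's $s_p(V)$ is literally a sum of sectional curvatures of planes orthogonal to $V$, the fibre has $s_1>0$ pointwise. Equipping $E$ with the canonical variation $g_t=t\,g^{\mathcal V}\oplus g^{\mathcal H}$ of the natural submersion metric and taking $t$ small, the vertical contribution should dominate the O'Neill horizontal and mixed terms and deliver $s_1(g_t)>0$. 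Choosing $B$ itself $2$-connected (typically a sphere of dimension $\geq 3$, available as soon as $n\geq 11$; the few remaining low-dimensional cases are covered by hand since $\Omega^{\spin}_n$ is small for $7\leq n\leq 10$) makes $E$ itself $2$-connected, because $\H\P^2$ is simply-connected with $\pi_2=0$.

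The topological transfer then runs as follows. Given $2$-connected spin $M^n$ with $\alpha([M])=0$, Stolz's theorem yields a spin cobordism $W^{n+1}$ between $M$ and a disjoint union $\bigsqcup_i E_i$ of such representatives. Since $\dim W\geq 8$, interior surgery on $W$ in degrees $1,2,3$ makes $W$ $3$-connected relative to its boundary; by handle theory $W$ then admits a handle decomposition from the $\bigsqcup E_i$-side with all handles of index $\geq 4$, so $M$ is obtained from $\bigsqcup E_i$ by a sequence of surgeries of codimension $\geq 4=p+3$ for $p=1$. Each such surgery preserves positive $1$-curvature by \cite[Main Theorem]{Labbi1}, so starting from the $s_1>0$ metric built in the previous step one arrives at the desired metric on $M$.

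The main obstacle is the geometric step: while positivity of scalar curvature of canonical-variation metrics on $\H\P^2$-bundles is classical, the sharper bound $s_1>0$ must be extracted by an honest analysis of the O'Neill formulas for the partial trace defining $s_1$, exploiting strict positivity of the fibre sectional curvatures together with smallness of the fibre scale. Once this estimate is secured, the remainder of the proof is a routine adaptation of the highly-connected surgery programme, and the hypothesis $n\geq 7$ is used precisely to give room both for interior surgery on $W$ and for the construction of $2$-connected $\H\P^2$-bundle representatives.
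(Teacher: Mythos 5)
Your overall scheme --- necessity via $\tr S=\tfrac{n-2}{2}\Scal$ and Lichnerowicz, sufficiency via Stolz's geometric $\H\P^2$-bundle representatives combined with Labbi's codimension-$(p+3)$ surgery theorem at $p=1$ --- is exactly the strategy the paper attributes to \cite{Labbi1}, and the geometric step you flag (checking $s_1>0$ on canonical-variation metrics of $\H\P^2$-bundles) is indeed where the analytic content lies; the paper simply records, after Theorem~\ref{stolz}, that such total spaces $E$ carry $s_p>0$ for $p\leq 6$.

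However, you introduce an unnecessary constraint that creates a genuine gap. You insist that the total space $E$ of the Stolz representative be $2$-connected, which forces a $2$-connected base (hence $n\geq 11$) and leaves you to dispatch $7\leq n\leq 10$ ``by hand''; but $\Omega^{\spin}_8=\Z^2$ and $\Omega^{\spin}_9,\Omega^{\spin}_{10}$ are nontrivial, so that hand-waving is a real hole. The point you are missing, which the paper isolates as Proposition~\ref{theorem1} (and whose case $r=2$, $\ell=4$ the paper explicitly identifies as the surgery lemma behind this theorem), is that the surgery reduction requires only the \emph{target} manifold $M$ to be $r$-connected, not the manifold $\bigsqcup E_i$ it is cobordant to. One makes the ambient spin cobordism $W$ into a $3$-connected manifold by interior surgery (possible since $\pi_jBSpin=0$ for $j\leq 3$ and $\dim W=n+1\geq 8$); since $M$ itself is $2$-connected, $H_j(W,M;\Z)=0$ for $j\leq 3$, and Lemma~\ref{lemma2} hands you surgeries on $\bigsqcup E_i$ of codimension $\geq 4$ with no connectivity hypothesis on $E_i$. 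Two smaller points: Stolz's Theorem~\ref{stolz} identifies $\Imm\,T$ with all of $\ker\alpha$, so your caveat about a residual ``small list of torsion classes'' handled separately is not needed; and your handle count is stated backwards --- what one needs is that all handles read from the $M$-side have index $\geq 4$, equivalently index $\leq n-3$ from the $E$-side, so that the induced surgeries on $\bigsqcup E_i$ have codimension $n+1-\lambda\geq 4$.
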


The main technique in proving Theorem
  \ref{Labbi-thm2}, is a Surgery Theorem \cite[Main Theorem]{Labbi1}
  and the results by S. Stolz on {\sl geometric $\H\P^2$-bundles}. 
  \subsection{Geometric $\H\P^2$-bundles} We recall that in order to
  prove that vanishing of the index $\alpha([M])\in KO_n$ is
  sufficient for existence of a metric with positive scalar curvature
  on $M$, S. Stolz proves that all cobordism classes in $\ker
  \alpha\subset \Omega^{\spin}_n$ can be realized as total spaces of
  geometric $\H\P^2$-bundles.

  In more detail, let $PSp(3)$ be the projectivization of the
  symplectic orthogonal group $Sp(3)$. It is well-known that the group
  $PSp(3)$ is the isometry group of the standard metric on the
  projective plane $\H\P^2$. Let $BPSp(3)$ be the classifying space of
  the group $PSp(3)$, and $EPSp(3)\to BPSp(3)$ be the universal
  principal bundle. This gives a {\sl universal geometric
    $\H\P^2$-bundle} $E(\H\P^2)\to BPSp(3)$ with a fiber $\H\P^2$ and
  a structure group $PSp(3)$, where the total space $E(\H\P^2)$ is
  defined in a usual way:
$$
E(\H\P^2)= EPSp(3)\times_{PSp(3)}\H\P^2.
$$
Then for any map $f: B\to BPSp(3)$, there is a natural pull-back
$\H\P^2$-bundle $E\to B$ given by the diagram:
$$
\begin{diagram}
\setlength{\dgARROWLENGTH}{1.2em}
\node{E}
      \arrow{s}       
      \arrow[2]{e,t}{\hat f}
\node[2]{E(\H\P^2)}
      \arrow{s}
\\
\node{B}
      \arrow[2]{e,t}{f}
\node[2]{BPSp(3)}
\end{diagram}
$$
This construction defines a transfer map
$$
T : \Omega_{n-8}^{\spin}(BPSp(3)) \lra \Omega^{\spin}_n
$$
which takes a cobordism class of a map $f: B\to BPSp(3)$ to the
cobordism class of the manifold $E$ as above. The following result
provides a key in  proving the necessity in Theorem \ref{GL-thm2}.
\begin{theorem}\label{stolz}{\rm (Stolz, \cite{Stolz1})}
There is an isomorphism $\Imm \ T\cong \ker \alpha$.
\end{theorem}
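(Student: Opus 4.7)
The plan is to establish the two inclusions separately. The inclusion $\Imm\, T \subseteq \ker \alpha$ is soft and geometric: given a geometric $\H\P^2$-bundle $E \to B$ classified by $f: B \to BPSp(3)$ with $B$ a spin manifold, the symmetric metric on $\H\P^2$ has positive scalar curvature and the structure group $PSp(3)$ acts by isometries. Picking a connection on the underlying principal bundle and shrinking the fiber metric sufficiently, a standard O'Neill-type argument produces a metric of positive scalar curvature on the total space $E$. By the Lichnerowicz formula the Dirac index of $E$ vanishes, hence $\alpha([E])=0$.

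The reverse inclusion $\ker \alpha \subseteq \Imm\, T$ is the deep homotopy-theoretic content. The strategy is to lift $T$ to a map of spectra and analyze its cokernel. The $\alpha$-map is the $KO$-theoretic Thom class $\hat A:MSpin \to ko$, while $T$ is induced by a map of spectra $\Sigma^{8}MSpin \wedge BPSp(3)_{+}\to MSpin$ built out of the $\H\P^2$-bundle construction. At the prime $2$ one invokes the Anderson-Brown-Peterson splitting of $MSpin$ as a wedge of suspensions of $ko$, of $ko\langle 2\rangle$, and of mod-$2$ Eilenberg-MacLane summands, with $\alpha$ corresponding to projection onto the bottom $ko$-summand. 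It then suffices to show that the transfer hits every non-bottom summand. This is an Adams spectral sequence computation whose input is the structure of $H^{*}(BPSp(3);\Z/2)$ as a module over the Steenrod algebra, together with the existence of enough explicit $\H\P^2$-bundles realizing the required characteristic classes. At odd primes $MSpin$ is much simpler and the analogous surjectivity is checked directly via characteristic numbers of $\H\P^2$-bundles.

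The main obstacle is the $2$-primary Adams spectral sequence calculation. One must exhibit explicit $PSp(3)$-bundles whose transfers detect each non-$\alpha$ summand at the $E_{2}$-level, and then verify that no Adams differential destroys this detection; this is a delicate piece of Steenrod-algebra bookkeeping that occupies most of Stolz's paper. Once the module-theoretic surjectivity is secured, a dimension count in each degree, combined with the already-established containment $\Imm\, T \subseteq \ker\alpha$, forces the claimed isomorphism.
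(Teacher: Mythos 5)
This theorem is not proved in the paper: it is cited verbatim from Stolz's Annals paper \cite{Stolz1} as an external input, and the paper uses it as a black box in the proof of Theorem A. So there is no ``paper's own proof'' to compare against; what you have written is a sketch of Stolz's published argument.

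As such a sketch, your outline is broadly faithful. The easy inclusion $\Imm\,T\subseteq\ker\alpha$ via O'Neill submersion metrics on the total space, shrinking the $\H\P^2$-fiber to obtain positive scalar curvature, and then Lichnerowicz, is exactly the standard argument and it is correct. For the hard inclusion, you correctly identify the key ingredients: the transfer realized as a stable map $\Sigma^{8}MSpin\wedge BPSp(3)_+\to MSpin$, the Anderson--Brown--Peterson $2$-local splitting of $MSpin$ with $\alpha$ corresponding to projection onto the bottom $ko$-summand, the reduction to showing the transfer covers every non-bottom summand, and the separate (easier) odd-primary check. This is the structure of Stolz's proof.

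The one thing to stress is that your proposal is a roadmap, not a proof: the entire technical weight lies in the Adams spectral sequence computation (establishing that the $\H\P^2$-transfer, restricted to a suitable $ko$-module summand of $ko\wedge BPSp(3)_+$, is surjective onto $ko\langle 2\rangle$ and the Eilenberg--MacLane pieces, and that this survives differentials), and your sketch explicitly defers that. That is appropriate for a statement the paper itself imports from the literature, but it should be acknowledged that no part of the genuinely hard content has been reproduced. Two minor imprecisions: calling $\hat A: MSpin\to ko$ a ``Thom class'' is loose (it is the Atiyah--Bott--Shapiro orientation, i.e.\ a Thom spectrum orientation, not a cohomology class), and the closing ``dimension count'' is an informal stand-in for what is really a prime-by-prime surjectivity argument combined with the already-known containment $\Imm\,T\subseteq\ker\alpha$.
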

By construction, the total space $E$ of a geometric $\H\P^2$-bundle
carries a metric with positive scalar curvature, which is given by a
choice of any metric on the base and giving a standard homogeneous
metric to each fiber $\H\P^2$ scaled appropriately to get a positive
scalar curvature on $E$. 

One can observe that if $E$ is the total space of a geometric
$\H\P^2$-bundle, then it carries a metric with positive curvature
$s_p$ for $p\leq 6$.

\subsection{Main results}
Assume that $M$ is a $3$-connected manifold. Then $M$ has a canonical
spin-structure. There are two possibilities: either $M$ is
string-manifold or not. It is well-known that the obstruction to
existence of a string-structure
is given by $\frac{1}{2}p_1(M)$, where $p_1$ is the first Pontryagin
class. The following result is somewhat analogous to Theorem
\ref{GL-thm1}:
\begin{thm-a}\label{main-1} 
  Let $M$ be a compact $3$-connected
  non-string manifold with $\dim M=n\geq 9$. Then $M$ admits a
  Riemannian metric $g$ with positive $2$-curvature if and only if
  $\alpha([M])=0$ in the group $KO_n$, where $\alpha:
  \Omega^{\spin}_n\to KO_n$ is as above.
\end{thm-a}
Any $3$-connected manifold is $\spin$, and $\Omega^{\spin}_*$ is a
relevant bordism group here. We use the surgery technique and Theorem
\ref{stolz} to show that if $\alpha([M])=0$ and $M$ is not string,
then it has a metric with positive $2$-curvature which is ``pulled
back'' from a nice metric on the total space of a geometric
$\H\P^2$-bundle as above.


Let now $M$ be $3$-connected and string. A relevant bordism group
here is $\Omega^{\str}_*$.  Precisely, we prove the following theorem
which is analogous to Theorem B of \cite{GL1}.
\begin{thm-b}\label{main-b} 
 Let $M_1$ be a compact $(3+r)$-connected, $0\leq r\leq 3$, string
  manifold of dimension $n\geq 9+2r$. Assume that $[M_1]=[M_0]$ in the
  cobordism group $\Omega_n^{\str}$, where $M_0$ admits a metric $g_0$
  with $s_{r+2}(g_0)>0$. Then $M_1$ also admits a metric $g_1$
  with $s_{r+2}(g_1)>0$.

  In particular, a compact $3$-connected string manifold $M$ of
  dimension $n\geq 9$ that is string cobordant to a manifold of
  positive $2$-curvature admits a metric with positive $2$-curvature.
\end{thm-b}
For instance, if $M$ is string cobordant to zero,
then the conclusion of the theorem holds for $M$. It
  is known that $\Omega_n^{\str}=0$ for $n=11$ or $n=13$; therefore
any compact $3$-connected string manifold of dimension $11$ or $13$
always has a metric with positive $2$-curvature.

Let $I$  denote the subset of
$\Omega^{\str}_*$ which consists of bordism classes containing
representatives with positive $2$-curvature. Clearly
  $I$ is an ideal of $\Omega^{\str}_*$  since the cartesian product of
a manifold of positive $2$-curvature with an arbitrary manifold has
positive $2$-curvature.
We therefore define the following {\sl geometrical genus}:
\begin{equation*}
\Pi: \Omega^{\str}_*\rightarrow \Omega^{\str}_*/I,
\end{equation*}
which is a ring homomorphism. 

 Let $\phi_W : \Omega^{\str}_*\to \Z[[q]]$ be the Witten genus, see
 \cite{Dessai,Stolz}, and section \ref{witten-genus} below.  By
 definition, if $\phi_Wx\neq 0$, then $x\in \Omega^{\str}_*$ has
 infinite order.  
We prove the following result which is analogous to
 Corollary B of \cite{GL1}. 
\begin{thm-c}\label{main-2}
  Let $N$ be a $(3+r)$-connected, for $0\leq r\leq 3$, string manifold
  of dimension at least $9+2r$ with vanishing Witten genus then some
  multiple $N\sharp \cdots \sharp N$ carries a metric
  of positive $(r+2)$-curvature.

  In particular, if $N$ is a $3$-connected string manifold of
  dimension at least $9$ with vanishing Witten genus then some
  multiple $N\sharp \cdots \sharp N$ carries a metric
  of positive $2$-curvature.
\end{thm-c}
This result suggests that the geometric genus $\Pi$ is
  related to the Witten genus.  It is an open question whether $N$
itself carries a metric of positive $2$-curvature.

Clearly, Theorems A, B and Corollary B give only partial
classification of manifolds with metrics of positive
$2$-curvature. However, we use a construction which
eventually may be useful in obtaining  an affirmative classification.
Before stating our conjecture, we briefly describe
the construction.

Let $F_4$ be the 52-dimensional compact simple sporadic Lie group. It
is well-known that it contains a closed subgroup isomorphic to
$Spin(9)$ which is unique up to inner automorphism.  We denote by
$\Ca\P^2$ the Cayley projective plane which coincides with the
homogeneous space $F_4/Spin(9)$. Then the canonical homogeneous metric
on $\Ca\P^2$ has $F_4$ as a full isometry group, see
\cite[p. 264]{Wolf}. Let $BF_4$ be a classifying space, and $EF_4\to
BF_4$ be a universal principle $F_4$-bundle. A universal
{\sl geometrical $\Ca\P^2$-bundle} can be identified with the fiber
bundle $BSpin(9)\to BF_4$ which has a fiber $\Ca\P^2$ and a structure
group $F_4$. Then for a manifold $L$ and a map $f : L\to BF_4$, we
obtain the following map of fiber bundles
\begin{equation}\label{pull-back}
\begin{diagram}
\setlength{\dgARROWLENGTH}{1.5em}
\node{W}
     \arrow[2]{e,t}{f^*}
     \arrow{s,r}{\pi}
\node[2]{BSpin(9)}
     \arrow{s}
\\
\node{L}
     \arrow[2]{e,t}{f}
\node[2]{BF_4}
\end{diagram}
\end{equation}
The fiber bundle $\pi : W\to L$ as above is called a {\sl geometrical
  $\Ca\P^2$-bundle}. 

It is well-known that $\Omega^{\str}_*\otimes \Q$ is a polynomial
ring.  In more detail, A. Dessai shows that there exist generators
$x_{4k}$, such that
$$
\Omega^{\str}_*\otimes \Q\cong \Q[x_8,x_{12},x_{16},\ldots] \ ,
$$
and each element $x_{4k}$ with $k\geq 4$ is represented by a manifold
$W^{4k}$ which is a total space of a geometrical
$\Ca\P^2$-bundle $\pi_k : W^{4k}\to L^{4k-16}$, see \cite{Dessai} and
section \ref{witten-genus} below.  We consider a transfer map
$$
T^{\str} : \Omega^{\str}_{\ell}(BF_4)\lra \Omega_{\ell+16}^{\str}
$$ 
given as follows. Let $f: L\to BF_4$ be a map representing an element
$x\in \Omega^{\str}_{\ell}(BF_4)$. Then the manifold $W$ from
(\ref{pull-back}) represents the element $T^{\str}(x)\in
\Omega_{\ell+16}^{\str}$. Also, we recall that there is an integral
version of the Witten genus
$$
\phi_W^{\Z}:
\Omega_*^{\str}\to KO_*[[q]]
$$ 
which factors through the coefficients $\tmf_*$ of the
{\sl topological modular forms theory} $\tmf$ (formally known as
$eo_2$):
\begin{equation}\label{pull-back2}
\begin{diagram}
\setlength{\dgARROWLENGTH}{1.5em}
\node{\Omega_*^{\str}}
     \arrow[2]{e,t}{\phi_W^{\Z}}
     \arrow{se,r}{\phi_{AHR}}
\node[2]{KO_*[[q]]}
\\
\node[2]{\tmf_*}
     \arrow{ne,b}{\omega}
\end{diagram}
\end{equation}
Here $\phi_{AHR}: \Omega_*^{\str}\to \tmf_*$ is the string-orientation
constructed by Ando, Hopkins and Strickland, see \cite{AHS1,AHS2}.
\begin{remark} It is known that the groups
    $\Omega_*^{\str}$ have no $p$-torsion away from $p=2,3$. It is
    tempting to conjecture that $\Imm \ T^{\str}$ and $\Ker \
    \phi_{AHR}$ coincide in $\Omega_*^{\str}$ localized at primes 2
    and 3. It turns out, this is too optimistic: the authors were
    informed by M. Joachim that the image $\Imm \ T^{\str}$ is
    strictly less than $\Ker \ \phi_{AHR}$ in dimension
    32. Nevertheless, we think that one may use other homogeneous
    spaces, besides $\Ca\P^2$, to represent elements of the kernel
    $\Ker \ \phi_{AHR}$ by manifolds with positive 2-curvature.
\end{remark}

\begin{conj-c}
  Let  $M$ be a $3$-connected string manifold with
  $\dim M =n \geq 9$. Then $M$ admits a Riemannian metric of
  positive $2$-curvature if and only if $\phi_{AHR}([M])=0$ in $\tmf_n$.
\end{conj-c}
We note that Conjecture C  is weaker than Stolz'
conjecture \cite[Conjecture 1.1]{Stolz} on the existence of a metric
with positive Ricci curvature. However it seems that it is
still very difficult to verify.
\subsection{Generalizations}
The previous results are generalized in this paper in 
different directions.

On one hand, we show that all the previous theorems and conjectures
are still valid if one replaces everywhere positive $2$-curvature
$s_2$ by positive second Gauss-Bonnet curvature $h_4$ or by both
$s_2>0$ and $h_4>0$. Recall that the $h_4$ curvature is a scalar
function defined on the manifold that generalizes the usual scalar
curvature. It is shown in \cite{Labbi2} that it is preserved under
surgeries of codimension at least $5$.

On the other hand, we prove that similar results hold for
$3$-curvature $s_3$ in the frame of $4$-connected Fivebrane and non
Fivebrane manifolds. Recall that a Fivebrane manifold is a string
manifold for which the fractional pontryagin class $\frac{1}{6}p_2$
vanishes.
 
The above Corollary B asserts in particular that if a compact
$6$-connected manifold $N$ is with dimension $\geq 15$ and with
vanishing Witten genus then some multiple of it $N\sharp \cdots\sharp
N$ carries a metric of positive $5$-curvature. We prove the following
analogous of Theorem A in this context:
%
\begin{thm-c'}
 Let $N$ be a $7$-connected and non-Fivebrane compact manifold of
  dimension $\geq 15$. If $N$ is string-cobordant to a manifold $M$
  which carries a metric with positive $6$-curvature, then
  $N$ also carries a metric with positive $6$-curvature.

  In particular, if a compact non-Fivebrane $7$-connected manifold $N$
  of dimension $\geq 15$  has a vanishing Witten genus then some
  multiple of it $N\sharp \cdots\sharp N$ carries a metric of positive
  $6$-curvature.
\end{thm-c'}
It remains an open question to prove that $N$ itself carries a metric
of positive $6$-curvature.

From another prospective, we prove the following
generalization of Theorem B:
\begin{thm-b'}
  Let $M_1$ be a compact $(4+r)$-connected, $0\leq r\leq 3$, Fivebrane
  manifold of dimension $n\geq 11+2r$. Assume that $[M_1]=[M_0]$ in
  the cobordism group $\Omega_n^{\Fbr}$, where $M_0$ admits a metric
  $g_0$ with $s_{3+r}(g_0)>0$. Then $M_1$ also admits a metric $g_1$
  with $s_{3+r}(g_1)>0$.

  In particular, a compact $4$-connected Fivebrane manifold $M$ with
  $\dim M=n\geq 11$ that is Fivebrane cobordant to a manifold of
  positive $3$-curvature also carries a metric with positive
  $3$-curvature.
\end{thm-b'}
The paper also  contains further generalizations of
the previous results, whenever it is appropriate, to all higher
$p$-curvatures in the case  of highly connected
$BO\!\left<\ell\right>$-manifolds.

\subsection{Plan of the paper} 
Section 2 contains basic definitions of string and Fivebrane
manifolds, string and Fivebrane cobordism rings and more general
$BO\!\left<\ell\right>$-manifolds and the corresponding cobordism
rings.

In sections 3 and 4, we prepare for the proof of the main results. In
section 3, we study some interactions between the codimension size of
a surgery made within a given $BO\!\left<\ell\right>$-cobordism class
and the order of connectivity of representatives of that class.  In
section 4, we recall the definitions of $p$-curvatures $s_p$ and the
second Gauss-Bonnet curvature $h_4$. We emphasize that the most
important property of these curvatures is   the stability of
their positivity under surgeries of sufficiently high codimensions.

In section 5, we prove Theorems A, B and B$'$. In section 6 we recall
useful material about the Witten genus and the recent results of
A.  Dessai about the rational cobordism groups and the
kernel of the Witten genus. The results of section 6
are used in section 7 in proving Theorem A$'$ and Corollary B.
\subsection{Acknowledgements} The first author
  thanks Michael Joachim and Anand Dessai and IH\`ES for
    a partial financial support. The second author acknowledges a
    financial support from the Deanship of Scientific Research at the
    University of Bahrain (ref. 2011/13).

\section{String and $BO\!\left<\ell\right>$-cobordism: Basic
  definitions}\label{section1}
Let $\R^{n+k}$ be the Euclidian space. 
We denote by 
$G_n(\R^{n+k})$ the Grassmanian manifold of
$n$-dimensional subspaces of $\R^{n+k}$, and by 
$$
U_{k,n}\lra
G_n(\R^{n+k}) \ \ \ \mbox{and} \ \ \ U_{k,n}^{\perp}\lra G_n(\R^{n+k})
$$ 
the tautological  bundle and  its orthogonal complement respectively. 
Then one obtains the spaces
$$
BO(n) := \lim_k G_n(\R^{n+k}), \ \ \ \mbox{and} \ \ \ BO:=\lim_n BO(n)
$$
which are the classifying spaces of the orthogonal group $O(n)$ and
its stable version $\displaystyle O:=\lim_n O(n)$.  The homotopy
groups of $BO$ are well-known:
$$
\pi_q BO= \left\{
\begin{array}{cl}
\Z/2 & \mbox{if} \ q=1,2 \ \mbox{(mod 8)}
\\
\Z  & \mbox{if} \ q=0,4 \ \mbox{(mod 8)}
\\
0  & \mbox{else} 
\end{array}
\right.
$$
Consider the Postnikov  tower of the
space $BO$:
\begin{equation}\label{eq1}
\begin{diagram}
\setlength{\dgARROWLENGTH}{1.2em}
\node{\vdots}
\arrow{s}
\\
\node{BO\!\left< 8\right>}
\arrow{s}
\arrow[2]{e,t}{p_2/6}
\node[2]{K(\Z,8)}
\\
\node{BSpin}
\arrow{s}
\arrow[2]{e,t}{p_1/2}
\node[2]{K(\Z,4)}
\\
\node{BSO}
\arrow{s}
\arrow[2]{e,t}{w_2}
\node[2]{K(\Z/2,2)}
\\
\node{BO}
\arrow[2]{e,t}{w_1}
\node[2]{K(\Z/2,1)}
\end{diagram}
\end{equation}
In each step the lowest homotopy group is killed by the map into the
corresponding Eilenberg-McLane space, and $w_1$, $w_2$ are the
Stiefel-Whitney classes and $p_1$, $p_2$ are the
Pontryagin classes respectively.

Now let $M$ be a manifold, $\dim M =n $. We denote by $h_0$ the
Euclidian metric on $\R^{n+k}$. Then an embedding $j: M \rh \R^{n+k}$
provides $M$ with the Riemannian metric $g=j^*h_0$ induced from the
Euclidian space $\R^{n+k}$. Furthemore, the metric $g$ gives the
tangent and normal bundles $TM$ and $NM$ the Euclidian structure, in
particular, we have the Gauss map
$$
\bar f: M \lra G_k(\R^{n+k}) 
$$
such that $\bar f^*U_{k,n}=NM$ and $\bar f^*U_{k,n}^{\perp}=TM$. A homotopy
class of $\bar f$ depends on the embedding $j: M \rh \R^{n+k}$,
however, it determines uniquely a homotopy class of the composition
$$
f: M \ov{\bar f}{\lra} G_k(\R^{n+k}) \lra BO .
$$
We say that a manifold $M$ has a {\sl string}-structure if the Gauss
map $f: M \lra BO$ lifts to the map $\hat f : M \lra BO\!\left<
  8\right>$, i.e. the following diagram commutes:
\begin{equation}\label{eq2}
\begin{diagram}
\setlength{\dgARROWLENGTH}{1.2em}
\node[4]{BO\!\left< 8\right>}
\arrow{s}
\\
\node[4]{BSpin}
\arrow{s}
\arrow[2]{e,t}{p_1/2}
\node[2]{K(\Z,4)}
\\
\node[4]{BSO}
\arrow{s}
\arrow[2]{e,t}{w_2}
\node[2]{K(\Z/2,2)}
\\
\node{M}
\arrow[3]{ne,t}{\hat f}
\arrow{neee}
\arrow{nneee}
\arrow[3]{e,t}{f}
\node[3]{BO}
\arrow[2]{e,t}{w_1}
\node[2]{K(\Z/2,1)}
\\
\end{diagram} 
\end{equation}
A choice of the lift $\hat f$ is sometimes called a
{\sl string-structure} on $M$. We emphasize that usually we use a
string-structure on the normal bundle $NM$; this
  implies that the tangent bundle $TM$ also has a string structure.
We denote by $\Omega^{\str}_n$ the corresponding cobordism group.

This construction has more general setting.  Let
$BO\!\left<\ell\right>$ be the $(\ell-1)$-connected cover of $BO$. We
say that a manifold $M$ has $BO\!\left<\ell\right>$-structure if it is
given a lift $f_{\left<\ell\right>}$ of the standard Gauss map as
above. Then there is a corresponding cobordism group
$\Omega^{\left<\ell\right>}_n$. Clearly we have that $
\Omega^{\left<4\right>}_n=\Omega^{\spin}_n$, and
$\Omega^{\left<8\right>}_n=\Omega^{\str}_n$.  There is one more
special case when manifolds have $BO\!\left<9\right>$-structure: these
are string manifolds with the vanishing class $\frac{1}{6}p_2$. In
some papers, for instance see \cite{SSS}, manifolds with
$BO\!\left<9\right>$-structure are called as {\sl Fivebrane
  manifolds}, and the cobordism group $\Omega^{\left<9\right>}_n$ is
called {\sl Fivebrane cobordism} and denoted as $\Omega^{\Fbr}_n$.
\section{Surgeries and $BO\!\left<\ell\right>$-manifolds}
 Let
  $M$ be a closed $n$-manifold,  $S^{s}\subset M$ be an embedded
  sphere with trivial normal bundle and let  $t=n-s-1$. This gives an embedding
  $S^s\times D^{t+1}\subset M$ which extends the embedding $S^s\subset
  M$. Then a {\sl surgery along} the embedded $S^s\subset M$ gives the manifold
$$
M'= \left(M\setminus (S^s\times D^{t+1})\right)\cup_{S^s\times S^t}(D^{s+1}\times S^t).
$$
Let $x\in \pi_s(M)$ be an element represented by a map $\xi: S^s \to
M$. If $2s<n$, then according to the Whitney Embedding Theorem,  the map
$\xi$ can be deformed to an embedding $S^s\subset M$. Then we say
that {\sl the element $x\in \pi_s(M)$ can be killed by a surgery} if
such an embedding has trivial normal bundle. 

Let $f: M \to BO$ be the map classifying the stable
  normal bundle of $M$; it gives the induced homomorphism $f_*:
  \pi_s(M)\to \pi_s(BO)$. The following result is well-known (see,
  \cite[Corollary 5.64]{Ranicki}, for example):
\begin{lemma}\label{killing}
  Let $M$ be a smooth manifold and let $f: M\to BO$ be the map classifying
  the stable normal bundle of $M$. Assume $2s<n=\dim M$. Then an
  element $x\in \pi_s(M)$ can be killed by a surgery if and only if
  $f_*(x)=0$ in $\pi_s(BO)$.
\end{lemma}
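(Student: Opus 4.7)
The plan is to translate triviality of the normal bundle of an embedded $S^s$ in $M$ into a stable statement in $\pi_s(BO)$, and then to identify that stable class with $f_*(x)$.

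First, by the Whitney embedding theorem, the hypothesis $2s<n$ ensures that any $x\in\pi_s(M)$ is represented by an embedding $i:S^s\hookrightarrow M$, whose normal bundle $\nu$ has rank $n-s$. By definition, $x$ can be killed by surgery precisely when some such $i$ has $\nu$ trivial, so the lemma amounts to deciding triviality of $\nu$ in terms of $f_*(x)$.

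Next I would compute the stable class of $\nu$. Fixing the embedding $M\hookrightarrow\R^{n+k}$ used to define $f$, the normal bundle of the composite embedding $S^s\hookrightarrow\R^{n+k}$ splits as $\nu\oplus i^*NM$. Since $TS^s$ is stably trivial, so is this composite normal bundle, which forces the identity $[\nu]=-[i^*NM]$ in $\widetilde{KO}(S^s)\cong\pi_s(BO)$. But $i^*NM$ is classified by $f\circ i$, which represents $f_*(x)$; hence $[\nu]=-f_*(x)$ stably, and in particular $\nu$ is stably trivial if and only if $f_*(x)=0$.

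To upgrade stable triviality to honest triviality, I would invoke the standard range statement: the stabilization $\pi_s(BO(k))\to\pi_s(BO)$ is an isomorphism for $k\geq s+1$, as follows by induction from the long exact sequence of the fibration $S^k\to BO(k)\to BO(k+1)$, since $\pi_s(S^k)=0=\pi_{s-1}(S^k)$ whenever $k\geq s+1$. The hypothesis $2s<n$ is exactly $n-s\geq s+1$, so the classifying map $S^s\to BO(n-s)$ of $\nu$ is null-homotopic if and only if its composite with $BO(n-s)\to BO$ is, i.e., if and only if $f_*(x)=0$. Combining the two steps yields the claimed equivalence.

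The only delicate point I anticipate is the boundary case $n=2s+1$, where $n-s=s+1$ sits right at the edge of the stability range; this is where one must check that a bundle of rank as small as $s+1$ on $S^s$ really is detected by its stable class. This is covered by the exact sequence above: the relevant obstruction lives in $\pi_s(S^{s+1})$, which vanishes, so the argument survives the borderline case and the lemma follows.
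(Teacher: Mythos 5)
Your argument is correct, and it is in fact the standard proof of this statement; the paper itself offers no proof, simply citing the result as well known (Ranicki, \emph{Algebraic and geometric surgery}, Corollary~5.64), so there is no in-paper argument to compare against. The two key steps you isolate are exactly the right ones: the normal bundle $\nu$ of an embedded $S^s$ representing $x$ satisfies $[\nu]=-[i^*NM]=-f_*(x)$ stably, because the normal bundle of $S^s$ in $\R^{n+k}$ is stably trivial and splits as $\nu\oplus i^*NM$; and the hypothesis $2s<n$, i.e.\ $n-s\geq s+1$, places $\nu$ in the range where $\pi_s(BO(n-s))\to\pi_s(BO)$ is an isomorphism, so stable triviality of $\nu$ is equivalent to honest triviality. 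Your treatment of the borderline case $n=2s+1$ via the vanishing of $\pi_s(S^{s+1})$ in the fibration sequence is the right check. One small point worth making explicit (though it causes no trouble): rank-$k$ bundles over $S^s$ are classified by free homotopy classes $[S^s,BO(k)]=\pi_s(BO(k))/\pi_1(BO(k))$, and you silently work with based classes; since $0\in\pi_s(BO(k))$ is fixed by the $\pi_1$-action, triviality of the bundle is indeed equivalent to the based class being zero, and the stabilization map is $\pi_1$-equivariant, so the conclusion carries over.
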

The previous lemma \ref{killing} implies in particular that for a
$BO\!\left<\ell\right>$-manifold $M$ of dimension $n>2s$, every map
$S^s\to M$ has trivial normal bundle for $s\leq l-1$.  We have
therefore proved the following lemma:
\begin{lemma}\label{lemma1} Assume $n> 2(\ell-k)\geq 2$ and $k\geq 1$.
\begin{enumerate}
\item[{(1)}] Let $x\in \Omega^{\left<\ell\right>}_n$. Then $x$ can
  be represented by some $(\ell-k)$-connected manifold.
\\
\item[{(2)}] Let $M_0$ and $M_1$ be $(\ell-k)$-connected
  $BO\!\left<\ell\right>$-manifolds. Then if
  $M_0$ and $M_1$ represent the same element $x\in
  \Omega^{\left<\ell\right>}_n$, there exists a
  $BO\!\left<\ell\right>$-cobordism $(W,M_0,M_1)$, where the pair
  $(W,M_1)$ is $(\ell-k)$-connected.
\end{enumerate}
\end{lemma}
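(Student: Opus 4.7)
The plan is to prove both assertions by the standard surgery-below-the-middle-dimension technique, using Lemma \ref{killing} to arrange that each individual surgery can be performed within the $BO\!\left<\ell\right>$-category.

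For part (1), fix a representative $M$ of $x$, which may be assumed connected after an initial reduction. I would proceed by induction on $s$ from $1$ to $\ell - k$, at each step modifying $M$ by surgery while remaining in the same class of $\Omega^{\left<\ell\right>}_n$ so that $\pi_s(M)$ becomes trivial. Assume inductively that $\pi_i(M) = 0$ for $i < s$. Choose finitely many maps $\xi_j : S^s \to M$ generating $\pi_s(M)$; since $2s \leq 2(\ell - k) < n$, Whitney's theorem lets each $\xi_j$ be isotoped to an embedding. The $BO\!\left<\ell\right>$-structure on $M$ factors the Gauss map $f : M \to BO$ through $BO\!\left<\ell\right>$, which is $(\ell-1)$-connected, so for $s \leq \ell - 1$ the map $f_{*} : \pi_s(M) \to \pi_s(BO)$ vanishes on each $[\xi_j]$, and Lemma \ref{killing} produces a surgery killing $[\xi_j]$. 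The key point to verify is that the trace of each such surgery carries a $BO\!\left<\ell\right>$-structure extending the one on $M$: the obstruction to lifting the Gauss map across the attached handle $D^{s+1} \times D^{n-s}$ lies in $\pi_s(BO\!\left<\ell\right>) = 0$, so the lift extends. Since the handle has index $s$ and codimension $n - s > s$, the surgery removes the chosen generator in $\pi_s$ without creating new classes in lower degrees; finitely many surgeries suffice to kill $\pi_s(M)$, and induction concludes.

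For part (2), take any $BO\!\left<\ell\right>$-cobordism $(W, M_0, M_1)$. Apply the same inductive scheme to the pair $(W, M_1)$, performing all surgeries in the interior of $W$ so that the boundary $M_0 \sqcup M_1$ is unaffected. Since $M_1$ is $(\ell - k)$-connected, the long exact sequence of $(W, M_1)$ yields $\pi_s(W, M_1) \cong \pi_s(W)$ for $s \leq \ell - k$, so it suffices to kill $\pi_s(W)$. A generator is represented by an embedded $s$-sphere in $\mathrm{int}(W)$ via Whitney (using $2s < n + 1$), which by the $BO\!\left<\ell\right>$-structure on $W$ has trivial normal bundle exactly as in part (1). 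Surgery in the interior then produces a new $BO\!\left<\ell\right>$-cobordism with the same boundary and with $\pi_s(W)$ reduced by one generator; the $BO\!\left<\ell\right>$-structure again extends across the surgery by the vanishing of the obstruction in $\pi_s(BO\!\left<\ell\right>)$.

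The main obstacle is verifying, at each surgery, that the $BO\!\left<\ell\right>$-structure extends across the attached handle. This amounts to the vanishing of the obstruction class in $\pi_s(BO\!\left<\ell\right>)$, which holds for $s \leq \ell - 1$; this is precisely what forces the constraint $s \leq \ell - k \leq \ell - 1$. A secondary subtlety is that the numerical hypothesis $n > 2(\ell - k)$ is used twice: once to apply Whitney's embedding theorem for $s$-spheres in dimension $n$, and once in the relative setting of part (2) to apply it for $s$-spheres in $\mathrm{int}(W)$ of dimension $n + 1$.
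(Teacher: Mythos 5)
Your proof is correct and follows essentially the same route as the paper: the paper derives Lemma \ref{lemma1} immediately from Lemma \ref{killing} by invoking (without spelling out) exactly the standard surgery-below-the-middle-dimension argument that you have written out in detail, using that $BO\!\left<\ell\right>$ is $(\ell-1)$-connected so that normal bundles are trivial and the tangential structure extends over the handles. Your explicit identification of where each numerical hypothesis ($n>2(\ell-k)$ for the Whitney step and its relative version, $k\geq 1$ to guarantee $s\leq \ell-1$) enters is a faithful filling-in of what the paper leaves implicit.
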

In particular, if $n> 8$, any element $x\in \Omega^{\str}_n$ can
be represented by a $4$-connected manifold and if $M_0$, $M_1$ are two
$4$-connected manifolds representing $x$, then there exists a
string-cobordism $(W,M_0,M_1)$ where $(W,M_1)$ is $4$-connected.

Next, we need more details on bordisms between 
$BO\!\left<\ell\right>$-manifolds.  We start with the following 
fact which follows from the basic Morse theory:
\begin{lemma}\label{lemma2}
  Let $(W,M_0,M_1)$ be a simply connected bordism,
  $\dim W=n+1$, and let $n \geq p+3$, where $p$ is a
    positive integer.  Assume that $ H_{j} (W,M_1;\Z)=0 $ for all $j\leq
  p$, or that $ H^{j} (W,M_1;\Z)=0 $ for all $j\leq
  p$.  Then $M_1$ can be obtained from $M_0$ by surgeries of
  codimension at least  $p+1$.
\end{lemma}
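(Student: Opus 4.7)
This is a standard consequence of handle theory for simply connected cobordisms; I would prove it by combining Morse theory on $W$ with Smale-style handle cancellation.

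First, I choose a Morse function $f\colon W\to [0,1]$ with $f^{-1}(0)=M_0$, $f^{-1}(1)=M_1$, and pairwise distinct critical values, so that $W$ is built from $M_0\times[0,1]$ by successively attaching handles along $M_0\times\{1\}$. Reversing $f$ produces a dual handle decomposition on $M_1$ in which an $M_0$-handle of index $k$ becomes an $M_1$-handle of index $n+1-k$. A handle of index $k$ attached to $M_0$ corresponds to a surgery along an embedded sphere $S^{k-1}\subset M_0$ with trivial normal bundle of codimension $n-k+1$. Hence the conclusion---that $M_1$ is obtained from $M_0$ by surgeries of codimension $\geq p+1$---is equivalent to saying that the $M_0$-handles all have index $\leq n-p$, or dually, that the $M_1$-handles all have index $\geq p+1$.

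Second, I unify the two forms of the hypothesis. If $H_j(W,M_1;\Z)=0$ for $j\leq p$, then the relative cellular chain complex coming from the handle decomposition on $M_1$ is acyclic in degrees $\leq p$. If instead $H^j(W,M_1;\Z)=0$ for $j\leq p$, Lefschetz duality $H^j(W,M_1;\Z)\cong H_{n+1-j}(W,M_0;\Z)$ gives $H_k(W,M_0;\Z)=0$ for $k\geq n+1-p$, so the handle decomposition on $M_0$ is acyclic in its top $p+1$ degrees; the argument below then runs symmetrically with the roles of $M_0$ and $M_1$ swapped.

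The central step is to eliminate all handles of index $\leq p$ from the handle decomposition on $M_1$. Because $W$ is simply connected and $\dim W=n+1\geq p+4\geq 5$, Smale's machinery applies: index-$0$ handles are absorbed using connectivity of $W$, index-$1$ handles are traded using the simple connectivity of $W$, and for each $2\leq k\leq p$ the acyclicity of the relative chain complex in degree $k$ supplies pairs consisting of a $k$-handle and a $(k+1)$-handle with algebraic intersection $\pm 1$, which are cancelled geometrically via the Whitney trick. After finitely many such cancellations only $M_1$-handles of index $\geq p+1$ remain, which is what we wanted. The essential obstacle is the geometric realization of the algebraic cancellations: one must produce embedded Whitney disks, for which the hypotheses $\pi_1(W)=0$ and $n\geq p+3$ are precisely what is needed to place the relevant attaching and belt spheres in general position and to embed the disks disjointly. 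This is the content of Smale/Whitney cancellation theory (cf. Milnor's \emph{Lectures on the $h$-Cobordism Theorem}), from which the lemma follows.
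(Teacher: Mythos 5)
Your overall route --- Morse/handle decomposition, the index-to-codimension dictionary, and Smale--Whitney cancellation --- is exactly the ``basic Morse theory'' the paper is alluding to, and the treatment of the first hypothesis ($H_j(W,M_1;\Z)=0$ for $j\leq p$) is correct: that vanishing plus simple connectivity gives, via Hurewicz, that $(W,M_1)$ is $p$-connected, and the standard handle-cancellation machinery (Smale, or Wall's geometrical connectivity theorem) produces a decomposition of $W$ on $M_1$ with handles of index $\geq p+1$, i.e.\ surgeries on $M_0$ of codimension $\geq p+1$. The codimension bound $n\geq p+3$ keeps all of those cancellations inside the Whitney range.

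The cohomological case, however, does not ``run symmetrically'' in the way you assert, and this is where your argument has a genuine gap. Lefschetz duality converts $H^j(W,M_1)=0$ for $j\leq p$ into $H_k(W,M_0)=0$ for $k\geq n+1-p$ --- vanishing in the \emph{top} degrees of $(W,M_0)$, not the low degrees. That is not the hypothesis with $M_0$ and $M_1$ interchanged, and you cannot simply rerun the low-index elimination from the $M_0$ side. Reading the cohomological hypothesis through the universal coefficient theorem, one finds $H_j(W,M_1;\Z)=0$ for $j\leq p-1$ but only that $H_p(W,M_1;\Z)$ is \emph{torsion}, possibly nonzero. After you cancel handles of index $\leq p-1$ on the $M_1$ side, the boundary map $\partial_{p+1}\colon C_{p+1}\to C_p$ is injective only up to torsion cokernel: its Smith normal form is $\operatorname{diag}(d_1,\dots,d_r)$ with some $d_i$ possibly greater than $1$, and handle sliding cannot change those elementary divisors. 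So the $(p,p+1)$-pairs with $d_i\neq\pm1$ cannot be cancelled geometrically, and the decomposition you obtain has handles of index $\geq p$ on the $M_1$ side --- one degree short of the claimed conclusion. What actually saves the applications in the paper (e.g.\ Proposition~4.4, where $p=4$ and $M_1$ is $3$-connected) is a handle-trading step: the attaching spheres $S^{p-1}$ of the surviving $p$-handles lie in $M_1$, and since $\pi_{p-1}(M_1)=0$ there, they bound embedded disks and can be traded up to index $p+2$. That trade requires $(p-1)$-connectivity of $M_1$ (or some substitute), a hypothesis not visible in your sketch and not spelled out in the lemma as stated. You should either add this trading argument (noting the extra connectivity it uses) or restrict to the homological hypothesis; as written, the cohomological branch of the proof is incomplete.
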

The following result  is a consequence of Lemma
\ref{lemma2}:
\begin{proposition}\label{theorem1}
  Let $M_1$ be a compact $r$-connected
  $BO\!\left<\ell\right>$-manifold of dimension $n$, where $n\geq
  2r+3$ and $\ell\geq r+2$. Let $M_0$ be a compact
manifold, such that $[M_0]=[M_1]$ in
    $\Omega^{\left<\ell\right>}_n$.  
Then $M_1$ can be
  obtained from $M_0$ by surgeries of codimension at
    least $r+2$.
 \end{proposition}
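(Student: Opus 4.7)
The strategy is to reduce to an application of Lemma \ref{lemma2}. Since $[M_0]=[M_1]$ in $\Omega^{\left<\ell\right>}_n$, I begin by choosing any $BO\!\left<\ell\right>$-cobordism $(W,M_0,M_1)$ of dimension $n+1$. The conclusion of the proposition---that $M_1$ is obtained from $M_0$ by surgeries of codimension at least $r+2$---translates via standard Morse theory to the condition that $W$ admit a handle decomposition on $M_0$ with handles of index at most $n-r-1$. Dually, this is the requirement that the inclusion $M_1\hookrightarrow W$ be an $(r+1)$-equivalence, i.e., that the pair $(W,M_1)$ be $(r+1)$-connected. Once this is arranged, $H_j(W,M_1;\Z)=0$ for $j\leq r+1$, and Lemma \ref{lemma2} with $p=r+1$ (valid since $n\geq 2r+3\geq r+4$ for $r\geq 1$, the edge case $r=0$ being immediate) produces the desired sequence of surgeries.

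The core step is therefore to modify the initial cobordism $W$ by surgeries in its interior, leaving $\partial W=M_0\sqcup M_1$ untouched, until $(W,M_1)$ becomes $(r+1)$-connected. I proceed inductively on $j=0,1,\ldots,r+1$, killing generators of $\pi_j(W,M_1)$ one at a time. At each stage the $r$-connectivity of $M_1$ together with the long exact sequence of the pair implies that any class in $\pi_j(W,M_1)$ for $j\leq r+1$ is represented by an absolute class in $\pi_j(W)$. Three ingredients then allow such a class to be killed by an interior surgery. First, the Whitney embedding theorem applies, since $2j\leq 2(r+1)<n+1=\dim W$ by the hypothesis $n\geq 2r+3$, so the class is represented by an embedded $j$-sphere which by transversality may be pushed into the interior of $W$. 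Second, Lemma \ref{killing} forces this sphere to have trivial normal bundle, because $j\leq r+1\leq\ell-1$ by the hypothesis $\ell\geq r+2$ and $W$ carries a $BO\!\left<\ell\right>$-structure. Third, the attached $(j+1)$-handle has index at most $r+2\leq\ell$, so the lift of the classifying map of the stable normal bundle to $BO\!\left<\ell\right>$ extends canonically over the cocore, and the $BO\!\left<\ell\right>$-structure on $W$ survives the surgery.

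After finitely many such interior surgeries the resulting $BO\!\left<\ell\right>$-cobordism $(W,M_0,M_1)$ satisfies $(W,M_1)$ is $(r+1)$-connected, and Lemma \ref{lemma2} applied with $p=r+1$ then yields the conclusion. The main obstacle lies in the geometric bookkeeping of the inductive step above: for each $j\leq r+1$ one must simultaneously represent generators of $\pi_j(W,M_1)$ by embedded spheres in the \emph{interior} of $W$ with trivial normal bundle, and verify that the $BO\!\left<\ell\right>$-structure extends across every surgery. This is exactly where the two numerical hypotheses $n\geq 2r+3$ and $\ell\geq r+2$ are used in tandem, paralleling the argument of Lemma \ref{lemma1}(2) but with only $M_1$ (and not $M_0$) required to satisfy a connectivity condition, the connectivity of $M_0$ being irrelevant because no surgery is performed at the $M_0$ end.
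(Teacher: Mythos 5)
Your argument is correct and takes essentially the same route as the paper's proof: both reduce to Lemma \ref{lemma2} with $p=r+1$ by performing interior surgeries on a given $BO\!\left<\ell\right>$-cobordism $(W,M_0,M_1)$ to make the pair $(W,M_1)$ become $(r+1)$-connected, using $\ell\geq r+2$ (so $\pi_j(BO\!\left<\ell\right>)=0$ for $j\leq r+1$) and $n\geq 2r+3$ (to embed the relevant spheres). The paper phrases the surgery step as making $W$ itself $(r+1)$-connected and then invoking the $r$-connectivity of $M_1$ through the long exact sequence of the pair, whereas you work directly with lifts of relative classes, but the underlying surgeries are identical.
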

\begin{proof}
  Let $(W,M_0,M_1)$ be a $BO\!\left<\ell\right>$-cobordism,
and let $M_1$ be $r$-connected.
Using surgeries we can assume that $W$ is $(r+1)$-connected since
$r+1\leq \ell-1$ and the dimension of $W$ is sufficiently
high. Consequently, we have $H_i(W)=0$ for all $i\leq r+1$. On the
other hand $M_1$ is $r$-connected, thus $H_i(M_1)=0$
for all $i\leq r$ and therefore $H_i(W,M_1)=0$ for
all $i\leq r+1$. Finally, Lemma \ref{lemma2} implies that
$M_1$ can be obtained from $M_0$  by
surgeries of codimension $r+2$.
\end{proof}
\begin{remarks}
  {(1)} For $r=1$ and $\ell =4$, Proposition \ref{theorem1} asserts
  the following: if $M_1$ is compact simply connected spin manifold of
  dimension $n\geq 5$ that is spin-cobordant to a manifold $M_0$, then
  $M_1$ can be obtained from $M_0$ by surgeries of codimension at
  least three. This was first noticed by Gromov and Lawson, see
  \cite{GL1}.

    {(2)} For $r=2$ and $\ell =4$, Proposition \ref{theorem1} is the
    same as the surgery Lemma 4.2 in \cite{Labbi1}: if $M_1$ is
    compact $2$-connected manifold of dimension $n\geq 7$ that is
    spin-cobordant to a manifold $M_0$, then $M_1$ can be obtained
    from $M_0$ by surgeries of codimension $\geq 4$.
\end{remarks}
We specify Theorem \ref{theorem1} for string-manifolds.
\begin{corollary}\label{corollary3-co-str}
  Let $M_1$ be a compact $r$-connected string manifold of
  dimension $n\geq 2r+3$, where $r\leq 6$. Then if $M_0$ is a
compact manifold such that $[M_0]=[M_1]$ in $\Omega^{\str}_n$, then
$M_1$ can be obtained from $M_0$ by surgeries of codimensions at
least $ r+2$.

In particular, if $M_1$ is a compact $3$-connected string manifold of
dimension $n\geq 9$ string-cobordant to a manifold $M_0$, then $M_1$
can be obtained from $M_0$ by surgeries of codimensions at least $ 5$.
\end{corollary}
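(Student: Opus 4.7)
The plan is to deduce this corollary as a direct specialization of Proposition \ref{theorem1} to the case of string structures, for which the relevant tower stage is $BO\!\left<8\right>$, i.e.\ $\ell=8$.

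First I would note that a string structure on $M_1$ is by definition a $BO\!\left<8\right>$-structure, so $M_1$ is in particular a $BO\!\left<\ell\right>$-manifold with $\ell=8$. The hypothesis $r\leq 6$ in the corollary is precisely what is needed to ensure the inequality $\ell\geq r+2$ required by Proposition \ref{theorem1}, while the dimensional hypothesis $n\geq 2r+3$ is identical in both statements. The assumption $[M_0]=[M_1]$ in $\Omega^{\str}_n=\Omega^{\left<8\right>}_n$ gives a string-cobordism, i.e.\ a $BO\!\left<\ell\right>$-cobordism for $\ell=8$, between $M_0$ and $M_1$.

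With these identifications, Proposition \ref{theorem1} applies verbatim and yields that $M_1$ is obtained from $M_0$ by surgeries of codimension at least $r+2$, which is the first assertion. The ``In particular'' statement is then just the specialization $r=3$: the connectivity hypothesis matches, $n\geq 2\cdot 3+3=9$ matches, and the codimension bound becomes $r+2=5$.

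There is no real obstacle here; the only thing to double-check is that the hypothesis ``$M_0$ compact'' is enough to invoke Proposition \ref{theorem1} (whose statement assumes $M_0$ is a compact manifold with $[M_0]=[M_1]$ in the appropriate cobordism group), and that the inequalities line up. Both are immediate, so the proof is a one-line reduction.
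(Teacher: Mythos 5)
Your proof matches the paper's own approach: the paper explicitly introduces the corollary with ``We specify Theorem~\ref{theorem1} for string-manifolds,'' i.e.\ it is the direct specialization of Proposition~\ref{theorem1} to $\ell=8$ (so that $\Omega^{\left<8\right>}_n=\Omega^{\str}_n$ and $r\leq 6$ gives $\ell\geq r+2$). Your verification of the hypotheses and the ``in particular'' case with $r=3$ is exactly what the paper intends.
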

If we continue our climbing of the Postnikov tower we reach the
$BO\!\left<9\right>$-manifolds or {\sl Fivebrane manifolds}.  It is
well known that the corresponding Postnikov invariant is given by
$\frac{1}{6}p_2$, where $p_2$ is the second Pontryagin class.  We
specify Proposition \ref{theorem1} for Fivebrane-manifolds:
\begin{corollary}
  Let $M_1$ be an $r$-connected Fivebrane manifold of dimension $n\geq
  2r+3$, where $r\leq 7$. Then if $M_0$ is a Fivebrane-manifold with
  $[M_0]=[M_1]$ in $\Omega^{\Fbr}_n$, then $M_1$ can be obtained from
  $M_0$ by surgeries of codimensions at least $ r+2$.
 
  In particular, if $M_1$ is a compact $4$-connected Fivebrane
  manifold of dimension $n\geq 11$ that is Fivebrane-cobordant to a
  manifold $M_0$, then $M_1$ can be obtained from $M_0$ by surgeries of
  codimensions at least $ 6$.
\end{corollary}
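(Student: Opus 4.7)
The plan is to observe that this corollary is a direct specialization of Proposition \ref{theorem1} to the case $\ell = 9$, since by definition a Fivebrane manifold is precisely a $BO\!\left<9\right>$-manifold and $\Omega^{\Fbr}_n = \Omega^{\left<9\right>}_n$. So essentially no new work is required; the proof reduces to checking that the hypotheses of Proposition \ref{theorem1} are met.

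First I would check the numerical hypothesis on connectivity: Proposition \ref{theorem1} requires $\ell \geq r+2$. With $\ell = 9$, this becomes $r \leq 7$, which is exactly the assumption of the corollary. The dimension hypothesis $n \geq 2r+3$ is also the same. Applying Proposition \ref{theorem1} with $\ell = 9$ then yields that $M_1$ can be obtained from $M_0$ by surgeries of codimension at least $r+2$, which is the first assertion.

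For the particular case, I would simply plug in $r = 4$: the hypothesis $r \leq 7$ is satisfied, $n \geq 2r+3 = 11$ matches the stated bound, and the conclusion gives surgeries of codimension at least $r + 2 = 6$. Since a compact $4$-connected manifold with a string structure for which $\tfrac{1}{6}p_2$ vanishes is exactly a $4$-connected Fivebrane manifold, there is nothing further to verify.

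No step here should pose a genuine obstacle; the only thing to be careful about is that we are invoking Proposition \ref{theorem1} in the generality already stated (i.e.\ for arbitrary $\ell$), so the Fivebrane case is obtained ``for free'' by substituting $\ell = 9$ and using the identification $\Omega^{\left<9\right>}_n = \Omega^{\Fbr}_n$ recalled at the end of Section \ref{section1}.
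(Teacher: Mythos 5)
Your proposal is correct and is exactly what the paper does: the corollary is stated as a direct specialization of Proposition~\ref{theorem1} to $\ell=9$ (with $\Omega^{\langle 9\rangle}_n=\Omega^{\Fbr}_n$), and the numerical checks $r\leq 7$, $n\geq 2r+3$, and the $r=4$ case are exactly as you describe.
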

\subsection{Non-string  3-connected manifolds}
In contrast with the result in Corollary \ref{corollary3-co-str}, we prove the
following result for $3$-connected but non-string manifolds.
\begin{proposition}\label{theorem1bis}
  Let $M_1$ be a $3$-connected and non-string compact manifold of
  dimension $\geq 9$. If $M_1$ is spin cobordant to a manifold $M_0$, then
  $M_1$ can be obtained from $M_0$ by surgeries of codimension $\geq 5$.
\end{proposition}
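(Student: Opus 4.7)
My plan is to apply the cohomology version of Lemma \ref{lemma2} with $p=4$, so the target is to produce a spin cobordism $(W,M_0,M_1)$ with $H^{j}(W,M_1;\Z)=0$ for all $j\le 4$. The vanishings for $j\le 3$ will come from a routine highly-connected cobordism modification; the case $j=4$ is where the $3$-connected non-string hypothesis on $M_1$ enters in an essential way.

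Starting from any spin cobordism $(W,M_0,M_1)$, I would first perform interior surgeries to make $W$ itself $3$-connected. Since $\pi_i(BSpin)=0$ for $i\le 3$, every class in $\pi_i(W)$ lies in the kernel of $f_*\colon \pi_i(W)\to\pi_i(BSpin)$, where $f\colon W\to BSpin$ classifies the stable normal bundle, and by Lemma \ref{killing} each such class can be killed by surgery (the Whitney embedding condition is met thanks to $\dim W=n+1\ge 10$). I would then apply Lemma \ref{killing} once more to kill every element of $\ker\bigl(f_*\colon \pi_4(W)\to\pi_4(BSpin)\cong\Z\bigr)$; this attaches only $5$-handles, hence preserves $3$-connectedness. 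After these steps $\pi_4(W)\cong H_4(W)$ (Hurewicz) injects into $\Z$, so $H_4(W)$ is either $0$ or $\Z$.

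The possibility $H_4(W)=0$ is ruled out precisely by non-string: combined with $H_3(W)=0$ and the universal coefficient theorem, it would give $H^4(W;\Z)=0$, hence $\tfrac12 p_1(W)=0$, hence $\tfrac12 p_1(M_1)=i^*\tfrac12 p_1(W)=0$, contradicting the hypothesis on $M_1$. Therefore $H_4(W)\cong\Z$ and $H^4(W;\Z)\cong\Z$. The cohomology long exact sequence of $(W,M_1)$, together with $H^3(M_1)=0$, now gives $H^4(W,M_1)=\ker\bigl(i^*\colon H^4(W)\to H^4(M_1)\bigr)$. By the universal coefficient theorem, $i^*$ is dual to $i_*\colon H_4(M_1)\to H_4(W)\cong\Z$, and the composite $f_*\circ i_*$ equals (up to sign) evaluation against $\tfrac12 p_1(M_1)$, which is non-zero by non-string. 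So $i_*$ is non-zero and $i^*\colon \Z\to H^4(M_1)$ is injective, giving $H^4(W,M_1)=0$. Combined with the trivial vanishings $H^j(W,M_1)=0$ for $j\le 3$, Lemma \ref{lemma2} produces the desired handle decomposition of $W$ with all surgeries from $M_0$ to $M_1$ of codimension at least $5$.

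The main subtlety, and where the non-string hypothesis does double duty, is the case $j=4$: non-string simultaneously forces $H_4(W)\ne 0$ after the kernel-killing surgeries in dimension $4$ and guarantees that the resulting $\Z$ in $H^4(W)$ restricts injectively to $H^4(M_1)$. Note that the companion homological statement $H_4(W,M_1)=0$ generically fails — the cokernel of $i_*\colon H_4(M_1)\to\Z$ can be a non-trivial finite cyclic group whenever $\tfrac12 p_1(M_1)$ is divisible — which is why the cohomology alternative inside Lemma \ref{lemma2} is indispensable here, and why the argument really does not extend to the string case without appealing to geometric $\H\P^2$-bundles.
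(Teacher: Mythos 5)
Your proposal is correct and follows essentially the same strategy as the paper's proof: make $W$ $3$-connected, perform interior surgeries to kill the kernel of $\tfrac12 p_1(W)$ on $\pi_4(W)\cong H_4(W)$, use the non-string hypothesis on $M_1$ to force $H^4(W,M_1)=0$, and invoke the cohomological form of Lemma~\ref{lemma2}. The only cosmetic differences are that you justify the $\pi_4$-killing step via Lemma~\ref{killing} (legitimate, since $\pi_4(BSpin)\to\pi_4(BO)$ is an isomorphism) where the paper proves a small standalone lemma about stably trivial bundles of rank $\ge 5$ over $S^4$, and you obtain $H^4(W,M_1)=0$ from injectivity of $i^*\colon H^4(W)\to H^4(M_1)$ rather than from $H_4(W,M_1)$ being torsion.
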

\begin{proof}  
  Let $(W,M_0,M_1)$ be a spin cobordism, where $M_1$ is $3$-connected,
  non-string with dimension at least 9, and $W$ is
  spin.  Using surgeries we can assume that $W$ is $3$-connected as
  $\dim W=n+1\geq 10$. Consequently by the  Hurewicz theorem we have
  $H_i(W)=0$ for all $i=1,2,3$ and $H_4(W)\cong
  \pi_4(W)$. Similarly, $H_i(M_1)=0$ for $i=1,2,3$ and
  $H_4(M_1)\cong \pi_4(M_1)$ since $M_1$ is $3$-connected.

Since for any $3$-connected space $X$,
  $H^4(X;\Z)\cong \Hom(H_4(X;\Z),\Z)$, the first Pontryagin class
  $p_1(W)$ is given by a homomorphism 
$$
p_1(W): H_4(W;\Z) \lra \Z .
$$
Similarly, the class $p_1(M_1)$ is given by a homomorphism $p_1(M_1):
H_4(M_1;\Z) \lra \Z$.  Then $TW|_N\cong TM_1\oplus \epsilon^1$, where
$\epsilon^1$ is a trivial linear bundle, which implies that
$p_1(W)=i^*(p_1(M_1))$, where $i: M_1\rh W$ is the boundary
inclusion. Also recall that the first Pontryagin class is divisible by
2 for spin manifolds.  Thus we obtain a commutative diagram:
\begin{equation}\label{diag3}
\begin{diagram}
\setlength{\dgARROWLENGTH}{1.5em}
\node{\!\!\!\!\!\!\!\!\!\!\!\!\!\!\!\!\!\!\pi_4(M_1)\cong H_4(M_1)}
\arrow[2]{e,t}{\frac{1}{2}p_1(M_1)}
\arrow{se,t}{i_{*}}
\node[2]{\Z}
\\
\node[2]{\!\!\!\!\!\!\!\!\!\!\!\!\!\!\!\!\!\!\!\!\!\!\!\!\!\!\!\!\!\!\!\!\pi_4(W)\cong H_4(W)}
\arrow{ne,b}{\frac{1}{2}p_1(W)}
\end{diagram}
\end{equation}
{\bf Remark.} Let $H_4(W;\Z)=F_4(W)\oplus T_4(W)$, where $F_4(W)$ and
$T_4(W)$ are free and torsion parts respectively.  Clearly the homomorphism 
$p_1(W): H_4(W;\Z) \lra \Z$, restricted to the torsion part $T_4(W)$, 
is trivial. Thus $p_1(W)$ is not a torsion class, and 
$p_1(W)/2=0$ implies $p_1(W)=0$. 

Both manifolds $W$ and $M_1$ are $3$-connected, spin
  (where, of course, $M_1\subset \p W$), however, they are not
  string-manifolds, i.e. the first Pontryagin class is not zero. We
  would like to show that the kernel of the homomorphism $p_1(W)/2$ can
  be killed by surgeries.
\begin{lemma}\label{4-sphere}
  Let $\eta^k\to S^4$ be a vector bundle of dimension $k\geq
  5$. Then there exists a $4$-dimensional bundle $\xi^4\to S^4$ such
  that $\eta^k\cong \xi^4\oplus \epsilon^{k-4}$, where
  $\epsilon^{k-4}\to S^4$ is a trivial vector bundle.
\end{lemma}
\begin{proof}
  Let $f: S^4 \to BO(k)$ be a map classifying the bundle $\eta$. Then
  we can assume that $S^4$ is mapped to the $4$-th skeleton
  $BO(k)^{(4)}$ of $BO(k)$. It is well-known that $BO(k)^{(4)}\subset
  BO(4)$ if $k\geq 5$. Thus up to homotopy, the map $f$ factors
  through $BO(4)$, i.e. we obtain a commutative (up to homotopy)
  diagram:
$$
\begin{diagram}
\setlength{\dgARROWLENGTH}{1.5em}
\node{S^4}
     \arrow[2]{e,t}{f}
     \arrow{se,b}{f_1}
\node[2]{BO(k)}
\\
\node[2]{BO(4)}
     \arrow{ne,b}{\iota}
\end{diagram}
$$
where $\iota: BO(4)\rh BO(k)$ is the standard embedding. Hence
$\eta^k\cong \xi^4\oplus \epsilon^{k-4}$.
\end{proof}
We continue with the proof of Proposition \ref{theorem1bis}.  Let
$S^4\rh W$ be an embedded sphere representing an element $x\in
\pi_4(W)\cong H_4(W;\Z)$ such that $p_1(x)=0$. We denote by
$\nu_{S^4}$ the normal of the embedding $S^4\rh W$. By
assumption,$TW|_{S^4}$ is stably trivial, i.e.
$TW|_{S^4}\oplus\epsilon^{k-n}\cong \epsilon^k$ for some
$k>n$. However, $TW|_{S^4}\cong TS^4\oplus \nu_{S^4}$, and we have
that
$$
 TS^4\oplus \nu_{S^4} \oplus \epsilon^{k-n}\cong \epsilon^k
$$
Since $TS^4\oplus \epsilon^1$ is a trivial bundle, thus 
$$
\epsilon^5\oplus\nu_{S^4}\oplus \epsilon^{k-n-1}\cong \epsilon^k.
$$
In particular, we obtain that $p_1\nu_{S^4}=0$, and Lemma
\ref{4-sphere} gives that $\nu_{S^4}=\xi^4\oplus\epsilon^{n-4}$ where
$\xi^4$ is a $4$-dimensional bundle with $p_1\xi^4=0$. Thus $\xi^4$ is
trivial bundle, i.e. the normal bundle $\nu_{S^4}$ is trivial.

Thus we can use surgeries on $W$ to kill the kernel
of $\frac{1}{2}p_1(W)$ and therefore one can uses 
the long exact sequence,
\begin{equation*}
\begin{diagram}
\setlength{\dgARROWLENGTH}{1.5em}
\node{H_4(M_1)}
\arrow{e}
\node{H_4(W)}
\arrow{e}
\node{H_4(W,M_1)}
\arrow{e}
\node{H_3(M_1)}
\end{diagram}
\end{equation*}
to show that $H_4(W,M_1)=0$ up to torsion elements.  In particular
$H^4(W,M_1)=0$. Therefore
Lemma \ref{lemma2} implies that $M_1$ can be obtained
  from $M_0$ by surgeries of codimension at least $5$.
\end{proof}
\subsection{Non-Fivebrane $7$-connected manifolds}
It is well-known that the second
Pontryagin class of a string manifold is divisible by
6 and $\frac{1}{6}p_2$ serves as the obstruction to
lifting a string structure to a Fivebrane structure.  One can without
difficulties adapt the proof of Proposition
  \ref{theorem1bis} to show the following:
\begin{proposition}\label{theorem1bisbis}
  Let $N$ be a $7$-connected and non-Fivebrane compact manifold of
  dimension $\geq 15$. If $N$ is string cobordant to a manifold $M$,
  then $N$ can be obtained from $M$ by surgeries of codimension $\geq
  9$.
\end{proposition}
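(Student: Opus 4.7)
The plan is to adapt the proof of Proposition \ref{theorem1bis} step for step, replacing the Pontryagin half-class $\frac{1}{2}p_1$ by the Fivebrane obstruction $\frac{1}{6}p_2$, and $4$-spheres by $8$-spheres. Start with a string cobordism $(W, M, N)$, so $\dim W = n+1 \geq 16$. Since $\ell = 8$ for a string manifold and $n+1 > 14 = 2(\ell-1)$, Lemma \ref{lemma1}(2) lets us assume, after preliminary surgeries within the string cobordism class, that $W$ is itself $7$-connected. The Hurewicz theorem then gives $H_i(W) = 0$ for $i \leq 7$ and $H_8(W) \cong \pi_8(W)$, and likewise $H_i(N) = 0$ for $i \leq 7$, $H_8(N) \cong \pi_8(N)$.

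Since $W$ and $N$ are both $7$-connected, universal coefficients yields $H^8(X;\Z) \cong \Hom(H_8(X;\Z),\Z)$ for $X = W, N$. The non-Fivebrane hypothesis on $N$ means that $\frac{1}{6}p_2(N): H_8(N;\Z) \lra \Z$ is a non-zero homomorphism. The identity $TW|_N \cong TN \oplus \epsilon^1$ forces $i^*\frac{1}{6}p_2(W) = \frac{1}{6}p_2(N)$ for the boundary inclusion $i: N \rh W$, giving a commutative triangle analogous to (\ref{diag3}) in which $\frac{1}{6}p_2(W): H_8(W) \lra \Z$ is likewise non-zero and vanishes on the torsion subgroup of $H_8(W;\Z)$.

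Next I would establish the $8$-sphere analog of Lemma \ref{4-sphere}: for $k \geq 9$, every bundle $\eta^k \to S^8$ splits as $\eta^k \cong \xi^8 \oplus \epsilon^{k-8}$, because the classifying map $S^8 \to BO(k)$ can be pushed into the $8$-skeleton $BO(k)^{(8)} \subset BO(8)$. Given $x \in \ker \frac{1}{6}p_2(W)$, represent $x$ by an embedded $S^8 \rh W$ (by general position, using that $W$ is simply-connected and of sufficiently large dimension). Because $W$ is string, its stable tangent bundle over $S^8$ is classified by a map $S^8 \to BO\langle 8\rangle$, and $\pi_8(BO\langle 8\rangle) \cong \pi_8(BO) \cong \Z$ is detected by $\frac{1}{6}p_2$; hence $TW|_{S^8}$ is stably trivial, and so is $\nu_{S^8}$ (using $TS^8 \oplus \epsilon^1 \cong \epsilon^9$). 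The splitting lemma above then upgrades stable triviality of $\nu_{S^8}$ to actual triviality, so one may perform surgery on $W$ along $S^8$ within the string cobordism category and kill $x$.

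Performing such surgeries on a generating set of $\ker \frac{1}{6}p_2(W)$ reduces the free part of $H_8(W)$ to the image of $\frac{1}{6}p_2(W)$ in $\Z$. The long exact sequence
\begin{equation*}
H_8(N) \lra H_8(W) \lra H_8(W, N) \lra H_7(N) = 0,
\end{equation*}
together with the fact that $\frac{1}{6}p_2(N)$ and $\frac{1}{6}p_2(W)$ have the same image in $\Z$, then yields $H_8(W, N) = 0$ modulo torsion, whence $H^8(W, N) = 0$ by universal coefficients. Lemma \ref{lemma2} with $p = 8$ concludes that $N$ is obtained from $M$ by surgeries of codimension at least $9$. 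The main obstacle lies in the normal-bundle argument of the third paragraph: one must exploit the full string structure on $W$, not merely its spin structure, in order to deduce triviality of $\nu_{S^8}$ from the vanishing of $\frac{1}{6}p_2$ on $[S^8]$.
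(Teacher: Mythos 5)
Your adaptation follows the paper's intended route (the paper merely remarks that the proof of Proposition \ref{theorem1bis} can be adapted ``without difficulties''), and the key new ingredient --- using the string structure of $W$ rather than merely its spin structure, so that $\pi_8(BO\!\left<8\right>)\cong\Z$ is detected by $\frac16 p_2$ and vanishing of $\frac16 p_2$ on $[S^8]$ forces stable triviality --- is exactly right, and is indeed the point the original proof of Proposition \ref{theorem1bis} leaves implicit. However, the step ``the splitting lemma upgrades stable triviality of $\nu_{S^8}$ to actual triviality'' is a non-sequitur: splitting $\nu_{S^8}\cong\xi^8\oplus\epsilon^{n-15}$ does not make a stably trivial rank-$8$ bundle over $S^8$ trivial, since $\ker(\pi_8 BO(8)\to\pi_8 BO)\neq 0$ (for instance $TS^8$ is stably trivial but has Euler number $2$). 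The splitting lemma should simply be discarded: $\nu_{S^8}$ has rank $(n+1)-8=n-7$, and as soon as $n-7\geq 9$, i.e.\ $n\geq 16$, the normal bundle is already in the stable range, $\pi_8 BO(k)\cong\pi_8 BO$ for $k\geq 9$, so stable triviality gives triviality directly. (The same defect occurs in the paper's own proof of Proposition \ref{theorem1bis}: ``$p_1\xi^4=0$, thus $\xi^4$ is trivial'' fails for $TS^4$, and there too the conclusion is rescued by the stable-range observation for rank $\geq 5$.)

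The more serious issue is the borderline dimension $n=15$, which the statement allows. Then $\dim W=16=2\cdot 8$, so the hypothesis $2s<\dim W$ of Lemma \ref{killing} fails for $s=8$, and $\nu_{S^8}$ has rank exactly $8$, outside the stable range: an embedded $8$-sphere in a $16$-manifold may have nonzero self-intersection, so stable triviality of $\nu_{S^8}$ does not force triviality, and one cannot kill $\ker\frac16 p_2(W)\subset\pi_8(W)$ by interior surgeries in the straightforward way. Your proposal gestures at ``sufficiently large dimension'' without checking this, and the argument as written is only valid for $n\geq 16$. Since the paper gives no proof of its own, this may reflect an imprecision in the paper's dimension hypothesis rather than a flaw peculiar to your adaptation, but it is a gap that needs to be addressed before the Proposition is established in the full stated range.
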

\section{Positive curvature and Surgeries}\label{Newsection3} 
The results of the previous section suggest that geometrical
properties that are stable under surgeries should have a
topological interpretation. This is the case for the positivity of the
$p$-curvatures and the second Gauss-Bonnet curvature as we will see in
the rest of this paper.
\subsection{Positive $p$-curvature}
We denote by $G_p(\R^{n})$ the
Grassmanian manifold of $p$-dimensional subspaces in $\R^{n}$, as
above.  Let $(M,g)$ be a Riemannian manifold. Then the metric $g$
provides the tangent bundle $TM$ the structure group $O(n)$.
This gives an associated smooth bundle
\begin{equation}\label{Grassm}
G_p(TM):=TM\times_{O(n)}G_p(\R^{n}) \lra M,
\end{equation}
with the fiber $G_p(TM_x)\cong G_p(\R^{n})$ over $x\in M$ and
the structure group $O(n)$. 

Then  the
$p$-curvature $s_p$, for $0\leq p\leq n-2$,  is a function
$$
s_p : G_p(TM) \lra \R 
$$
defined as follows.  Let $V$ be a tangent $p$-plane
at $x\in M$. We choose an orthonormal basis $\{e_i\}$ of the
orthogonal complement $V^{\perp}$ of
$V$ in $TM_x$, and define
\begin{equation}\label{p-curv}
  s_p(V)= \sum_{i,j=p+1}^{n}K_{i,j},
\end{equation}
where $K_{i,j}=K(e_i,e_j)$ is the usual sectional curvature. The
$0$-curvature $s_0$ coincides with the usual scalar
curvature $\Scal$, the $1$-curvature is the Einstein curvature and the
$(n-2)$-curvature is the usual sectional curvature.  

We are interested in understanding the conditions under which a manifold 
admits a Riemannian metric $g$ with positive $p$-curvature. 

We emphasize that if $s_{p}>$ then $s_j>0$ for all $j<p$. 
It turns out that the positivity of the $p$-curvature
can be preserved under surgeries:
\begin{theorem}[{\cite{Labbi1}}]\label{surgery1}
  Let $g_0$ be a Riemannian metric on a compact manifold $M_0$ with $s_p(g_0)>0$, and
  $M_1$ be a manifold constructed out of $M_0$ by a surgery of
  codimension $\geq p+3$. Then there exists a Riemannian metric $g_1$
  on $M_1$ with $s_p(g_1)>0$.
\end{theorem}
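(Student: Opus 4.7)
The plan is to follow the Gromov-Lawson surgery argument for positive scalar curvature, but track more carefully which sectional curvatures contribute to the $p$-curvature. Write the surgery data as an embedding $i : S^s \times D^{t+1} \hookrightarrow M_0$, where $t+1 = n - s \geq p+3$. I want to construct a metric $g_1$ on
\[
M_1 = (M_0 \setminus i(S^s \times D^{t+1})) \cup_{S^s \times S^t} (D^{s+1} \times S^t)
\]
with $s_p(g_1) > 0$.

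First I would use the tubular neighborhood theorem together with a standard interpolation on $S^s \times D^{t+1}_{2\delta}$ to modify $g_0$ to a metric $g_0'$ which is still of positive $p$-curvature, agrees with $g_0$ outside a slightly larger neighborhood of $S^s$, and near $S^s$ takes the product form $g_{S^s} + dr^2 + r^2\, g_{S^t}$ in geodesic polar coordinates on the normal disk. Positive $p$-curvature is an open condition, so a sufficiently small deformation preserves it; the $C^2$-closeness estimates are standard.

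Next comes the core of the argument, the Gromov-Lawson \emph{bending/torpedo} construction. I would replace the normal factor $dr^2 + r^2 g_{S^t}$ by a rotationally symmetric warped-product metric $dr^2 + f(r)^2 g_{S^t}$, where $f$ is a smooth positive profile that equals $r$ near $r = 0$, is concave and bent downward over an intermediate interval, and becomes (approximately) a small constant $\epsilon$ on a neck region. One then glues on a second, mirrored, half-torpedo which extends over $D^{s+1} \times S^t$, giving a metric $g_1$ on $M_1$. The $p$-curvature of the warped product $g_{S^s} + dr^2 + f(r)^2 g_{S^t}$ decomposes cleanly: the tangential $S^t$-$S^t$ sectional curvatures are $(1 - f'(r)^2)/f(r)^2$, the radial ones are $-f''(r)/f(r)$, the $S^s$-$S^s$ ones agree with those of $g_0'$, and the mixed $S^s$-normal curvatures vanish. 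For any $p$-plane $V$ at a point, a simple dimension count gives
\[
\dim(V^\perp \cap T S^t) \geq (n-p) + t - n = t - p \geq 2,
\]
so the partial trace $s_p(V)$ collects at least $\binom{t-p}{2} \geq 1$ sectional curvatures of the rescaled $S^t$-factor, each of order $(1-f'^2)/f^2$. On the neck this is $\sim 1/\epsilon^2$, and on the bending region one arranges $f'' < 0$, making the radial terms also positive. By choosing $\epsilon$ small and the bending sharp enough, these positive terms dominate the bounded contribution of the $g_{S^s}$ sectional curvatures and of any negative terms coming from the $S^s$-$S^t$ mixed directions (which vanish here). This is precisely where the codimension assumption $t+1 \geq p+3$ enters: it guarantees $t - p \geq 2$ and hence a genuine positive reservoir of sphere-curvature available in $V^\perp$.

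The main obstacle is this last quantitative estimate: one must keep the bending sharp enough to outweigh the error terms on the transition region where $g_1$ interpolates between the torpedo and the original product metric, uniformly in the $p$-plane $V$. This is handled by choosing the bending profile $f$ and the neck radius $\epsilon$ in the right order, exactly as in Gromov-Lawson, and then observing that the only step that is curvature-dimension sensitive is the dimension count $t - p \geq 2$ above. Finally, the new region $D^{s+1} \times S^t$ is covered by the second half-torpedo, whose $p$-curvature is positive by the same computation (now with $V^\perp$ necessarily meeting the $S^t$ factor in dimension at least $t - p$), so $s_p(g_1) > 0$ on all of $M_1$.
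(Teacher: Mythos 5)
Your dimension count is the right key insight: for any tangent $p$-plane $V$, the intersection $V^\perp \cap TS^t$ has dimension at least $(n-p)+t-n = t-p$, and the hypothesis codimension $\geq p+3$ gives $t-p\geq 2$, so $s_p(V)$ always collects at least one sectional curvature of the shrinking normal sphere. This is exactly the place where the codimension bound enters in Labbi's proof (and where the classical Gromov--Lawson argument needs codimension $\geq 3$ to get $t\geq 2$). Your account of the warped-product curvatures and the role of the small neck radius $\epsilon$ in making the $(1-f'^2)/f^2$ term dominate is also the right qualitative picture.

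However, the first step as you state it has a genuine gap, and it is not a minor one. You claim that $g_0$ can first be modified by ``a standard interpolation'' to a metric $g_0'$ which is exactly a Riemannian product $g_{S^s}+dr^2+r^2 g_{S^t}$ near $S^s$, and that this preserves $s_p>0$ because ``positive $p$-curvature is an open condition'' and the deformation is $C^2$-small. That is false: in Fermi coordinates around $S^s$ the induced metric agrees with the product only to zeroth order as $r\to 0$; the first-order discrepancy is governed by the second fundamental form of $S^s$ and the ambient curvature of $(M_0,g_0)$ along $S^s$, so the curvature tensors of the two metrics differ by a quantity of size $O(1)$, uniformly as the tube radius $\delta\to 0$. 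Linearly interpolating therefore changes curvatures by $O(1)$, which can destroy positivity. Put differently, the deformation from the tube metric to a product that preserves positive (scalar or $p$-) curvature \emph{is} the content of the Gromov--Lawson lemma, not a preliminary normalization you get for free. The actual argument keeps the original tube metric, constructs the bent metric as the induced metric on a rotationally symmetric hypersurface inside $(\text{tube})\times\R$, and only at the very end — after the $S^t$-factor has been shrunk enough that its curvature $\sim 1/\epsilon^2$ dominates all the $O(1)$ error terms coming from the non-product geometry — does the metric become (approximately, then exactly) a product $g_{S^s}+(\text{torpedo})$. Your Step 1 and Step 2 therefore have to be merged: the curvature computation must be carried out for the hypersurface metric, which has extra terms beyond the clean warped-product list you wrote (in particular the mixed $S^s$--normal curvatures do \emph{not} vanish there), and the dimension count must be used to show that the guaranteed $\binom{t-p}{2}\geq 1$ large sphere terms swamp these. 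Once that is done in the correct order, the proof goes through as in Labbi's paper.
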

There is a natural generalization of Theorem \ref{surgery1} for
elementary cobordism:
\begin{theorem}\label{surgery2}
  Let $g_0$ be a Riemannian metric on a compact manifold $M_0$ with $s_p>0$, and $M_1$ be
  a manifold constructed out of $M_0$ by a surgery of codimension
  $\ell+1\geq p+3$. Let
$$
W= M_0\times I\cup (D^{k+1}\times
  D^{\ell+1}), \ \ \p W = M_0\sqcup M_1,
$$ 
be the corresponding elementary cobordism.  Then there exists a
Riemannian metric $\bar g $ on $W$ with positive $p$-curvature and
such that
$$
\left\{
\begin{array}{ll}
\bar g = g_0 + dt^2 & \mbox{near $M_0$} ,
\\
\bar g = g_1 + dt^2 & \mbox{near $M_1$} .
\end{array}\right.
$$
In particular, the $p$-curvature of the metric $g_1$ is positive.
\end{theorem}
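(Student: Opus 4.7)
The plan is to reinterpret the construction underlying Theorem \ref{surgery1} as taking place on the elementary cobordism $W$, rather than producing only the endpoint metric on $M_1$. Labbi's proof of Theorem \ref{surgery1}, following the Gromov--Lawson template, proceeds in two stages: a one-parameter family of deformations of $g_0$ inside the tubular neighbourhood $S^k \times D^{\ell+1}(\rho)$ of the surgery sphere, followed by replacement of this neighbourhood by a ``torpedo''-type warped product on the handle $D^{k+1} \times D^{\ell+1}$. We package these stages as a metric on $W = M_0 \times I \cup (D^{k+1} \times D^{\ell+1})$.

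Step 1 (cylindrical part). Let $\{g_s\}_{s \in [0, 1]}$ be the family of metrics from the proof of Theorem \ref{surgery1}, each satisfying $s_p(g_s) > 0$, with $g_0$ the original metric and $g_1$ a metric having the standard product form $ds_k^2 + dr^2 + r^2 ds_\ell^2$ on a small collar of the surgery sphere. Choose a smooth slowly varying function $\phi \colon [0, 1/2] \to [0, 1]$ with $\phi \equiv 0$ near $0$ and $\phi \equiv 1$ near $1/2$, and on $M_0 \times [0, 1/2] \subset W$ set
\begin{equation*}
\bar g_{\mathrm{cyl}} = g_{\phi(t)} + dt^2.
\end{equation*}
Taking $\phi$ sufficiently slowly varying, the $t$-derivatives of the metric contribute $O(\varepsilon)$ terms to the sectional curvatures, so $s_p(\bar g_{\mathrm{cyl}}) > 0$ on the whole cylinder. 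By construction, $\bar g_{\mathrm{cyl}}$ is the Riemannian product $g_0 + dt^2$ near $M_0 \times \{0\}$.

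Step 2 (handle part). On $H = D^{k+1} \times D^{\ell+1}$, endow $H$ with a doubly-warped product metric $dr^2 + \alpha(r)^2 ds_k^2 + \beta(r)^2 ds_\ell^2$ in bipolar-type coordinates, choosing the profiles $\alpha, \beta$ so that (a) near the attaching boundary $S^k \times S^\ell$ the metric joins smoothly with the $t = 1/2$ slice of $\bar g_{\mathrm{cyl}}$, and (b) near the dual boundary it becomes a Riemannian product $g_1 + dt^2$ with slice the new manifold $M_1$. The same warping calculation as in \cite{Labbi1} then gives $s_p > 0$ throughout the handle, using the codimension hypothesis $\ell + 1 \geq p + 3$. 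Gluing along the common $S^k \times S^\ell$ interface by a partition of unity in a collar produces a smooth metric $\bar g$ on $W$ with the required boundary behaviour.

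The main obstacle lies in the interpolation region, where the construction must simultaneously control every sectional curvature $K(e_i, e_j)$ appearing in the partial trace $s_p(V) = \sum_{i, j = p+1}^n K_{ij}$, not just the scalar curvature. The codimension inequality $\ell + 1 \geq p + 3$ is exactly what is needed to ensure that the sum of curvatures in directions transverse to the surgery sphere dominates the negative contributions introduced both by the warping functions and by the $t$-dependence of $g_{\phi(t)}$. This is the same estimate carried out in \cite{Labbi1}, now applied on the $(r, t)$ parameter space of the cobordism, from which the conclusion $s_p(\bar g) > 0$ and the product form near $\partial W$ both follow.
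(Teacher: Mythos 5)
The paper does not actually prove Theorem~\ref{surgery2}: it is stated without proof, and the only indication of method is the remark immediately after Theorem~\ref{surgery3}, which says that ``the methods of M.~Walsh's papers \cite{Walsh1,Walsh2} can be adapted.'' Your two-stage decomposition of $W$ into a cylindrical part $M_0\times[0,1/2]$ carrying a slowly varying one-parameter family of metrics, and a handle $D^{k+1}\times D^{\ell+1}$ carrying a torpedo-type metric, is indeed the Walsh blueprint, so you are aiming at the same approach the authors have in mind.

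That said, the sketch has concrete gaps where it reports the work as already done. The assertion in Step~2 that ``the same warping calculation as in \cite{Labbi1} then gives $s_p>0$ throughout the handle'' is not correct: Labbi's estimate is for the metric on the $n$-dimensional surgered manifold $M_1$, built from a single warping function near the tube, whereas the handle metric on $W$ lives on an $(n+1)$-dimensional region and must have $s_p(V)>0$ for \emph{every} $p$-plane $V$ in an $(n+1)$-dimensional tangent space, including planes that are transverse to the boundary slices. The curvature expressions for a product of two torpedo metrics on $D^{k+1}\times D^{\ell+1}$ are genuinely different from those of the single-tube construction, and checking the partial trace $s_p$ over all such $p$-planes is exactly the new content that the remark attributes to Walsh's machinery --- it cannot be cited away to \cite{Labbi1}. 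Relatedly, the handle metric you write, $dr^2+\alpha(r)^2\,ds_k^2+\beta(r)^2\,ds_\ell^2$, is a metric on $I\times S^k\times S^\ell$ rather than on the full $(k+1+\ell+1)$-dimensional handle; Walsh's ``mixed torpedo'' is a sum of two torpedo metrics, one on each disc factor, which is not a single-parameter doubly warped product. Finally, the handle is attached to $M_0\times\{1\}$ along $S^k\times D^{\ell+1}$, not along $S^k\times S^\ell$, and gluing two metrics by a partition of unity does not preserve curvature positivity in general --- one needs the two metrics to actually agree on an open collar of the attaching region, which is exactly what the ``standard form near the surgery sphere'' normalization in Step~1 is designed to arrange. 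None of these is fatal to the strategy, but as written the central estimate is asserted rather than established, and that estimate is the whole point of adapting Walsh from scalar curvature to $p$-curvature.
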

\begin{remark}
The methods of M. Walsh's papers \cite{Walsh1,Walsh2} 
can be  adapted to provide a proof for the above  Theorem \ref{surgery2} and also  for Theorem \ref{surgery3} below.
\end{remark}
\subsection{Positive second Gauss-Bonnet curvature}
For a given Riemannian manifold $(M,g)$, we denote by
$R$, $\Ric$ and $\Scal$ respectively the Riemann curvature tensor, the
Ricci curvature tensor and the scalar curvature.  The {\sl second
  Gauss-Bonnet curvature}, denoted by $h_4$, is a quadratic scalar
curvature and it is defined by 
\begin{equation*}
\begin{array}{c}
h_4=||R||^2-||\Ric||^2+\frac{1}{4}\Scal^2,
\end{array}
\end{equation*}
see \cite{Labbi2}. Note that in four dimensions, the curvature $h_4$ coincides with the Gauss-Bonnet integrand.  This curvature is considered by
physicists as a possible substitute to the usual scalar curvature to
describe gravity in higher general theories of relativity, for
instance in string theories. Here we are interested in the positivity
properties of this invariant. First, let us recall the following
stability under surgeries result:
\begin{theorem}[\cite{Labbi2}]\label{surgery2a}
  Let $g_0$ be a Riemannian metric on a compact manifold $M_0$ with
    $h_4(g_0)>0$, and $M_1$ be a manifold constructed out of $M_0$ by
    a surgery of codimension at least $ 5$. Then there exists a
    Riemannian metric $g_1$ on $M_1$ with $h_4(g_1)>0$.
\end{theorem}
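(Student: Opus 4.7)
The plan is to adapt the Gromov--Lawson-type surgery deformation used in Theorem \ref{surgery1} for the $p$-curvatures to this quadratic Gauss--Bonnet invariant. Let $S^s \subset M_0$ be the embedded sphere along which the surgery is performed, with codimension $k+1 = n-s \geq 5$ and trivial normal bundle, so that a tubular neighborhood can be identified with $S^s \times D^{k+1}$. I would build a one-parameter family of metrics $\{g_t\}$ on $M_0$ that agrees with $g_0$ outside a small neighborhood of $S^s$ and such that, near $S^s$, the terminal metric acquires the product form $g_0|_{S^s} \oplus dr^2 \oplus \rho^2 g_{S^k}$ on a long cylindrical region, with $\rho>0$ small. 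The surgery is then realized in the standard cut-and-paste manner by replacing $S^s \times D^{k+1}$ with $D^{s+1} \times S^k$ and extending $g_1$ by a torpedo metric on the new cap.

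The core task is the curvature bookkeeping. On a model product $N \times S^k(\rho)$ one computes directly
\begin{equation*}
h_4 \;=\; h_4(N) \;+\; \frac{k(k-1)(k^2-5k+12)}{4\rho^4} \;+\; O(\rho^{-2}),
\end{equation*}
where the $O(\rho^{-2})$ depends linearly on the curvature of $N$. Since the quadratic $k^2-5k+12$ has negative discriminant, its value at $k\geq 4$ is at least $8$, so the $\rho^{-4}$ term is large and positive; for $\rho$ sufficiently small it dominates $h_4(N)$ and the cross-terms, forcing $h_4>0$ on the cylindrical region and on the torpedo cap. The codimension restriction $k+1\geq 5$ enters precisely where the sphere $S^k$ carries enough curvature for the quadratic combination to absorb the inevitable bending errors.

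The delicate step is the intermediate neck region, where one interpolates between $g_0$ and the product form via a convex profile $r=f(t)$ on a bent hypersurface in $S^s \times \R^{k+1}$. Two features complicate the analysis compared to the $p$-curvature case. First, $h_4 = \|R\|^2 - \|\Ric\|^2 + \tfrac14 \Scal^2$ contains a minus sign, whereas \eqref{p-curv} is a partial trace and hence controlled once the sphere direction dominates. Second, the bending contributes sectional curvatures of the form $-f''/f$ that enter both $\|R\|^2$ and $\|\Ric\|^2$, so their contributions to $h_4$ partially cancel and must be estimated simultaneously. I would follow Walsh's profile construction from \cite{Walsh1,Walsh2}: arrange $f$ so that $|f''|/f$ is uniformly small outside a short corner, and so that on the corner $f$ is already close to $\rho$. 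Then the dominant $\rho^{-4}$ contribution of the nearly round $S^k$-factor beats the bounded bending terms in both $\|R\|^2$ and $\|\Ric\|^2$, and $h_4$ remains strictly positive throughout the deformation.

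The main obstacle is exactly this quantitative sign control in the intermediate neck: unlike the trace invariants $s_p$, keeping the quadratic combination $\|R\|^2 - \|\Ric\|^2 + \tfrac14 \Scal^2$ positive demands a simultaneous comparison of spherical gains against bending losses in two different norms, and one must tune both the profile $f$ and the radius $\rho$ with some care. Once that estimate is in place, extending the metric across the interface with the torpedo cap and, if desired, carrying $\bar g$ to the elementary cobordism $W$ in the manner of Theorem \ref{surgery2}, is routine.
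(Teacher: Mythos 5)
This theorem is quoted by the paper from \cite{Labbi2}; no proof appears here, so there is no internal argument to compare against. That said, your Gromov--Lawson-style deformation is indeed the strategy used in \cite{Labbi2}, and the overall plan (bend to a product-like neck $N\times S^k(\rho)$, push $\rho\to 0$, cap with a torpedo, control the neck with a Walsh-type profile) is sound.

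There is, however, a concrete error in the model computation that masks the real reason for the codimension restriction. With the normalization the paper uses (the one for which $h_4$ is the Gauss--Bonnet integrand in dimension $4$, i.e.\ $h_4=\|R\|^2-4\|\Ric\|^2+\Scal^2$ up to a universal constant, so that the paper's $\|R\|^2$ carries a factor $\tfrac14$ relative to the naive sum $\sum R_{ijkl}^2$), the round sphere $S^k(\rho)$ has
\begin{equation*}
h_4\bigl(S^k(\rho)\bigr)\;=\;\frac{k(k-1)(k-2)(k-3)}{4\rho^{4}},
\end{equation*}
not $\dfrac{k(k-1)(k^2-5k+12)}{4\rho^{4}}$. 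The polynomial factors as $(k-2)(k-3)$ and \emph{vanishes} for $k=2,3$. This is the whole point: for surgeries of codimension $3$ or $4$ the shrinking sphere contributes nothing to the leading $\rho^{-4}$ term of $h_4$, so there is no dominating positive term to absorb the bending contributions, and the construction fails. For $k\geq 4$ (codimension $\geq 5$) the sphere contributes a strictly positive $\rho^{-4}$ term and the usual Gromov--Lawson/Walsh analysis can be pushed through. Your version, with $k^2-5k+12>0$ for all $k$, would wrongly suggest that even codimension $3$ or $4$ might work, and it forces you to invoke a vague ``absorbing the bending errors'' heuristic to recover the hypothesis $k\geq4$. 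Replacing the constant $12$ by $6$ fixes this and makes the codimension bound appear exactly where it should: already in the product model, before any bending is introduced. Your identification of the remaining difficulty (tracking $-f''/f$ simultaneously in $\|R\|^2$ and $\|\Ric\|^2$) is correct and is the genuine technical content of the proof in \cite{Labbi2}.
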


There is a natural generalization of Theorem \ref{surgery2a}:

\begin{theorem}\label{surgery3}
  Let $g_0$ be a Riemannian metric on a compact manifold $M_0$ with $h_4(g_0)>0$, and
  $M_1$ be a manifold constructed out of $M_0$ by a surgery of
  codimension $\ell+1\geq 5$. Let
$$
W= M_0\times I\cup (D^{k+1}\times
  D^{\ell+1}), \ \ \p W = M_0\sqcup M_1,
$$ 
be the corresponding elementary cobordism.  Then there exists a
Riemannian metric $\bar g $ on $W$ with $h_4(\bar g)>0$  and
such that
$$
\left\{
\begin{array}{ll}
\bar g = g_0 + dt^2 & \mbox{near $M_0$} ,
\\
\bar g = g_1 + dt^2 & \mbox{near $M_1$} .
\end{array}\right.
$$
In particular, $h_4(g_1)>0$.
\end{theorem}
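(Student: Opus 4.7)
The plan is to adapt the refined Gromov--Lawson surgery construction, in the family form developed by Walsh \cite{Walsh1,Walsh2}, but with the second Gauss--Bonnet curvature $h_4$ in place of scalar curvature, using Theorem \ref{surgery2a} as the pointwise input. First I would put on the cylinder $M_0\times I$ the product metric $g_0+dt^2$, which has $h_4>0$ (taking a metric product with a flat line preserves the positivity of $h_4$ of the factor $g_0$). Then I would perform a controlled deformation of this metric in a tubular neighbourhood of the embedded attaching sphere $S^k\times\{1\}\subset M_0\times\{1\}$, bringing it into the standard warped-product form $g_{S^k}+dr^2+f(r)^2 g_{S^\ell}$ of Gromov--Lawson type, while keeping $h_4>0$ throughout the deformation by the very estimates used in proving Theorem \ref{surgery2a}.

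Second, I would extend the metric across the handle $D^{k+1}\times D^{\ell+1}$ by an interpolating family of warped products: on the $D^{\ell+1}$-factor take a rotationally symmetric torpedo whose cap concentrates positive sectional curvature in the radial-angular $2$-planes, and combine it with a shrinking family of round metrics on $S^k$, smoothing the corner of the handle. The codimension bound $\ell+1\geq 5$ is used precisely here: it guarantees that enough pairs of mutually orthogonal directions in $D^{\ell+1}$ contribute positively to $\|R\|^2+\tfrac14\Scal^2$ so that these terms dominate the off-diagonal Ricci contributions appearing in $\|\Ric\|^2$. A small rescaling parameter $\varepsilon$ in the $D^{\ell+1}$-direction drives this domination, just as in the proof of Theorem \ref{surgery2a}.

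Finally, near the outgoing boundary I would apply Walsh's bending/straightening procedure to arrange that $\bar g$ coincides with $g_1+dt^2$ on a collar of $M_1$. The bending is carried out in a region where $h_4$ already enjoys a definite positive lower bound, so the smoothing preserves $h_4>0$; this yields the required metric on $W$, cylindrical at both ends, and the induced $g_1$ on $M_1$ automatically satisfies $h_4(g_1)>0$, giving an independent proof of Theorem \ref{surgery2a} as a by-product.

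I expect the main obstacle to be the handle interpolation of the second step. Unlike the scalar curvature, which is linear in the curvature tensor, $h_4$ is quadratic, so the off-diagonal Ricci terms produced by the warping and corner-rounding cannot simply be traced away: they must be quantitatively dominated by the $\|R\|^2+\tfrac14\Scal^2$ terms uniformly along the interpolating family. Walsh's family framework is what allows this control to be made uniform across the parameter, so that the torpedo construction and the bending at the two ends can be glued together coherently without losing positivity of $h_4$.
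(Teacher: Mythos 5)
Your proposal takes essentially the same route the paper has in mind: for this theorem the paper offers only the remark preceding it, namely that the methods of Walsh's papers \cite{Walsh1,Walsh2} can be adapted to provide a proof of Theorem~\ref{surgery2} and Theorem~\ref{surgery3}. Your three-stage unpacking of that adaptation (collar product metric preserving $h_4>0$, a torpedo/warped interpolation across the handle where the codimension bound $\ell+1\geq 5$ is spent on dominating the off-diagonal Ricci contributions, and Walsh-style bending to a cylindrical outgoing end) is consistent with what the paper gestures at, and your identification of the quadratic dependence of $h_4$ on the curvature tensor as the point where the scalar-curvature estimates do not transfer verbatim is exactly the genuine issue the paper leaves implicit.
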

\begin{remarks}
\begin{enumerate}
\item Theorems \ref{surgery2a} and \ref{surgery3} are still valid if
  we require the metric $g_0$ to have positive $h_4$ and positive
  $2$-curvature at the same time.
\item Theorems \ref{surgery2a} and \ref{surgery3} can be also be
  generalized to all higher Gauss-Bonnet curvatures $h_{2k}$. It is
  plausible that the condition $h_{2k}>0$ could be preserved under
  surgeries of codimension at least $2k+1$.
\item
Because $h_4$ is quadratic in the Riemann curvature tensor, one can
expect that the condition $h_4>0$ has two components, where each
component is an inequality that is linear in the Riemann curvature
tensor, it would be interesting to determine these components. 
In this direction, Theorem \ref{positivity} below 
relates the positivity of $h_4$ to the positivity and negativity of
the $p$-curvatures.
\end{enumerate}
\end{remarks}
\begin{theorem}[\cite{Labbi2}]\label{positivity}
  Let \ $(M,g)$ \ be a Riemannian manifold of
    dimension $n\geq 4$.  Assume that $s_p(g)\geq 0$ or $s_p(g)\leq 0$
    (respectively, $s_p(g)> 0$ or $s_p(g)< 0$), where $p\geq
    \frac{n}{2}$. Then $h_4(g)\geq  0$ (respectively, $h_4(g)> 0$).
  Furthermore, $h_4(g)\equiv 0$ if and
    only if the manifold $(M,g)$ is flat.
\end{theorem}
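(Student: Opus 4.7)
Since the claim is pointwise on $M$ and both hypothesis and conclusion are invariant under $R \mapsto -R$ (which sends $s_p \mapsto -s_p$ but leaves $h_4$ unchanged), I reduce to the case $s_p \geq 0$ and work at a fixed point $x \in M$. Choose an orthonormal basis $\{e_1,\ldots,e_n\}$ of $T_x M$ diagonalising the Ricci tensor, write $K_{ij} = K(\mathrm{span}(e_i,e_j))$, and set $V_S = \mathrm{span}(e_i : i \in S)$. Then the hypothesis becomes
\[
s_p(V_{T^c}) = 2 \sum_{i<j,\ i,j \in T} K_{ij} \geq 0 \qquad \text{for every $(n-p)$-subset } T \subset \{1,\ldots,n\}.
\]

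The first step is the pointwise identity
\[
h_4 = 3B + 8(C+D) + 2 A_d,
\]
where
\[
B = \sum_{i<j} K_{ij}^2, \qquad A_d = \sum_{\{i,j,k,l\}} \bigl(K_{ij}K_{kl}+K_{ik}K_{jl}+K_{il}K_{jk}\bigr),
\]
and $C, D \geq 0$ are the sums of squares of the components $R_{ijkl}$ having four and three distinct indices respectively. To establish it I would expand $\|R\|^2$, $\|\Ric\|^2$ and $\Scal^2/4$ in the chosen frame, using the symmetries of $R$ and the first Bianchi identity. Ricci-diagonality yields $\|\Ric\|^2 = 2B + 2T$, where $T$ is the sum over $3$-subsets of $(K_{ij}K_{ik}+K_{ij}K_{jk}+K_{ik}K_{jk})$, and this matches exactly the intersecting-pair contribution of $\Scal^2/4 = B + 2T + 2A_d$, producing the clean formula above. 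The terms $B,C,D$ are manifestly non-negative, so the sole issue is the sign of $A_d$.

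The hypothesis $p \geq n/2$ enters via the following combinatorial observation. For any two disjoint $(n-p)$-subsets $T, T'$,
\[
s_p(V_{T^c}) \cdot s_p(V_{T'^c}) = 4 \sum_{\{a,b\}\subset T,\ \{c,d\}\subset T'} K_{ab} K_{cd},
\]
and summing over all ordered pairs of disjoint $(n-p)$-subsets while counting how many pairs $(T,T')$ contain a fixed disjoint $2$-subset pair $(\{a,b\},\{c,d\})$ produces
\[
\sum_{T \cap T' = \emptyset} s_p(V_{T^c})\, s_p(V_{T'^c}) = 8 N_0 \, A_d, \qquad N_0 = \binom{n-4}{n-p-2}\binom{p-2}{n-p-2}.
\]
The condition $p \geq n/2$ is precisely $N_0 > 0$: it is equivalent to the existence of two disjoint $(n-p)$-subsets. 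Under $s_p \geq 0$ every summand on the left is a product of non-negative numbers, whence $A_d \geq 0$ and therefore $h_4 \geq 0$. Under $s_p > 0$ the same summands are strictly positive, so $A_d > 0$ and $h_4 > 0$.

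For rigidity, $h_4 \equiv 0$ forces $B = C = D = A_d = 0$ pointwise in the Ricci-diagonal frame; $B = 0$ gives $K_{ij} = 0$ for every coordinate $2$-plane while $C = D = 0$ kills the remaining off-diagonal $R_{ijkl}$, so $R \equiv 0$ at $x$ and $(M,g)$ is flat. The principal obstacle in carrying this out is the careful bookkeeping for the pointwise identity in the second paragraph: one has to identify which $3$- and $4$-index terms from $\|\Ric\|^2$ and $\Scal^2$ combine, and use Ricci-diagonality to annihilate the off-diagonal $\Ric_{ij}^2$ contributions so that the remaining expression depends only on the sectional curvatures $K_{ij}$ and the off-diagonal $R_{ijkl}$.
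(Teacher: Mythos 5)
This theorem is stated in the paper only as a citation to \cite{Labbi2}; no proof is reproduced, so I judge your argument on its own merits. It is correct, and the two steps you isolate are exactly the right ones. The pointwise identity in a Ricci-diagonal frame and the averaging
\[
\sum_{T\cap T'=\emptyset}s_p(V_{T^c})\,s_p(V_{T'^c})=8N_0\,A_d,
\qquad N_0=\binom{n-4}{n-p-2}\binom{p-2}{n-p-2},
\]
together with the observation that $N_0>0$ exactly on the range $\tfrac n2\le p\le n-2$, is the key lemma, and your count of $N_0$ is right (choose $T\setminus\{a,b\}$ from the $n-4$ free indices, then $T'\setminus\{c,d\}$ from the $p-2$ remaining ones). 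Two points worth tightening. (i) The identity $h_4=3B+8(C+D)+2A_d$ holds with the \emph{full tensor norms} $\|R\|^2=\sum_{i,j,k,l}R_{ijkl}^2$ and $\|\Ric\|^2=\sum_{i,j}\Ric_{ij}^2$: then $\|R\|^2$ contributes $4B+8C+8D$, $-\|\Ric\|^2$ contributes $-2B-2T$, $\tfrac14\Scal^2$ contributes $B+2T+2A_d$, and the $T$-terms cancel. With the other common normalization $\|R\|^2=\sum_{i<j,k<l}R_{ijkl}^2$, under which $h_4$ literally equals one quarter of the $4$-dimensional Gauss--Bonnet integrand, the $B$-coefficient drops to zero and the rigidity clause of the theorem fails (the flat-times-round product $S^2\times T^2$ has $s_2\ge 0$, vanishing Gauss--Bonnet integrand, and is not flat). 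So the rigidity statement itself forces the full-norm convention you implicitly adopted; you should flag this, since the paper's passing remark that $h_4$ "coincides with the Gauss--Bonnet integrand" in dimension $4$ is not compatible with that reading. (ii) The first Bianchi identity is not actually used anywhere in establishing the decomposition; antisymmetry in each pair, the pair-exchange symmetry $R_{ijkl}=R_{klij}$, and Ricci-diagonality suffice, so that appeal can be dropped. The reduction via $R\mapsto -R$ (a purely pointwise algebraic operation, legitimate since both $s_p$ and $h_4$ are evaluated on algebraic curvature tensors) and the rigidity bookkeeping at the end are fine.
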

\section{First Applications: Proof of Theorems A, B and
  B$'$}\label{section3}
\subsection{Proof of theorem A}
Theorem A is a consequence of the following theorem
\begin{theorem}\label{thm-Av}
  Let $M_1$ be a compact $3$-connected manifold of dimension $\geq 9$
  which is not string. If $M_1$ is spin cobordant to a manifold $M_0$
  which carries a metric $g_0$ with $s_2(g_0)>0$ (respectively, with
  $h_4(g_0)>0$), then $M_1$ also carries a metric $g_1$ with
  $s_2(g_1)>0$ (respectively, with $h_4(g_1)>0$).
 \end{theorem}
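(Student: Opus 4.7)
The proof is a direct combination of the topological reduction of Proposition~\ref{theorem1bis} with the surgery stability of the curvatures $s_2$ and $h_4$. The key numerical coincidence is that the codimension bound for preserving positive $2$-curvature (Theorem~\ref{surgery1} with $p=2$) and for preserving positive $h_4$ (Theorem~\ref{surgery2a}) are both exactly $5$, which matches precisely the codimension bound produced by the $3$-connected non-string hypothesis.

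First I would observe that any $3$-connected manifold is automatically spin, so the assumption that $M_1$ is spin cobordant to $M_0$ is well-posed. Since $M_1$ is $3$-connected, non-string, and $\dim M_1\geq 9$, Proposition~\ref{theorem1bis} provides a finite sequence of surgeries
$$M_0 = N_0 \leadsto N_1 \leadsto \cdots \leadsto N_k = M_1$$
in which every surgery has codimension at least $5$. Starting from the given metric $g_0$ on $M_0$ and applying Theorem~\ref{surgery1} (respectively Theorem~\ref{surgery2a}) inductively along this sequence, each step produces a metric on $N_{i+1}$ with positive $2$-curvature (respectively positive $h_4$), since $p+3=5$ is exactly the codimension threshold for $p=2$. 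After $k$ steps we obtain the desired metric $g_1$ on $M_1$ with $s_2(g_1)>0$ (respectively $h_4(g_1)>0$).

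There is no real obstacle at this stage: all the substantive work is packaged into the preparatory results. The non-string hypothesis is essential in Proposition~\ref{theorem1bis}, where it is used to guarantee via Lemma~\ref{4-sphere} that after making the cobordism $3$-connected the only obstruction to $4$-surgeries is controlled by $\tfrac{1}{2}p_1$, whose kernel can be killed to give the codimension~$5$ bound. This is what makes the numerical match with the surgery threshold $p+3=5$ available, and the theorem is the immediate geometric consequence.
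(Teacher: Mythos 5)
Your proposal matches the paper's proof: invoke Proposition~\ref{theorem1bis} to express $M_1$ as the result of codimension~$\geq 5$ surgeries on $M_0$, then apply Theorem~\ref{surgery1} (for $s_2$, with $p+3=5$) or Theorem~\ref{surgery2a} (for $h_4$) iteratively along the surgery sequence. Your additional remarks about why the non-string hypothesis is needed in Proposition~\ref{theorem1bis} are accurate context but not part of the argument itself.
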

\begin{proof}
  On one hand, Proposition \ref{theorem1bis} shows that the manifold
  $M_1$ can be obtained from $M_0$ by surgeries of codimension at
  least $5$. On the other hand, since $M_0$ has positive $2$-curvature
  (resp. positive $h_4$ curvature), Theorems \ref{surgery1} and
  \ref{surgery2a} show therefore that $M_1$ also carries a metric with
  $s_2>0$ (respectively, with $h_4 >0$).
\end{proof}
Now we prove Theorem A as follows. A compact $3$-connected manifold
$M$ of positive $2$-curvature and with dimension $\geq 9$  is in
particular a simply connected manifold of positive scalar curvature
and therefore its $\alpha$-genus vanishes by Theorem
\ref{GL-thm2}. Conversely, compact non-string $3$-connected manifold
$M$ of dimension $\geq 9$  and with vanishing $\alpha$-genus is spin
cobordant to the total space $E$ of an $\H\P^2$-bundle by Theorem
\ref{stolz}. The total space $E$ has positive $2$-curvature and
positive $h_4$ curvature. Thus the above Theorem \ref{thm-Av} shows
that $M$ carries a metric with positive $2$-curvature and a metric
with $h_4>0$.
\subsection{Proof of Theorems B and B$'$}
Now we return to $BO\!\left<\ell\right>$-manifolds. The following
theorem unifies and generalizes at the same time Theorem B of
\cite{GL1}, Lemma 4.2 of \cite{Labbi1} and Theorems B and B' of this
paper that were stated in the introduction.
\begin{theorem}\label{application} 
  Let $n,r, \ell$ be positive integers such that $n\geq 2r+3$ and
  $\ell\geq r+2$. Let $M_1$ be a compact $r$-connected
  $BO\!\left<\ell\right>$-manifold of dimension $n$. Assume
  $[M_1]=[M_0]$ in the cobordism group $\Omega_n^{\left<\ell\right>}$,
  where $M_0$ admits a Riemannian metric $g_0$ with $s_p(g_0)>0$ for
  some $p$ such that $0\leq p\leq r-1$.  Then $M_1$ also admits a
  Riemannian metric $g_1$ with $s_p(g_1)>0$.
\end{theorem}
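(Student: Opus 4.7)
The plan is to combine two results established earlier in the paper: the topological surgery decomposition from Proposition \ref{theorem1} and the geometric stability result from Theorem \ref{surgery1}.

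First, I would apply Proposition \ref{theorem1} directly. The hypotheses match verbatim: $M_1$ is a compact $r$-connected $BO\!\left<\ell\right>$-manifold of dimension $n\geq 2r+3$, with $\ell\geq r+2$, and $[M_0]=[M_1]$ in $\Omega_n^{\left<\ell\right>}$. The conclusion of that proposition is that $M_1$ is obtained from $M_0$ by a finite sequence of surgeries, each of codimension at least $r+2$.

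Next, I would check the arithmetic bridging the topological codimension bound to the geometric surgery theorem. From the hypothesis $p\leq r-1$ we get
\begin{equation*}
r+2 \geq (r-1)+3 \geq p+3,
\end{equation*}
so every surgery in the sequence has codimension at least $p+3$, which is precisely the codimension threshold appearing in Theorem \ref{surgery1}.

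Finally, I would proceed inductively along the sequence of surgeries. Starting from the metric $g_0$ on $M_0$ with $s_p(g_0)>0$, each application of Theorem \ref{surgery1} produces a metric of positive $p$-curvature on the manifold resulting from the next surgery; after finitely many steps one arrives at a metric $g_1$ on $M_1$ with $s_p(g_1)>0$. There is no substantive obstacle here, since the main difficulties (the topological reduction to high-codimension surgeries and the preservation of $s_p>0$ under such surgeries) have already been handled by Proposition \ref{theorem1} and Theorem \ref{surgery1} respectively; the role of the present theorem is simply to package them together. If one wished to strengthen the conclusion to produce a cobordism carrying a metric of positive $p$-curvature with product structure near the boundary (rather than merely a metric on $M_1$), one would invoke Theorem \ref{surgery2} in place of Theorem \ref{surgery1} at each inductive step, and the same arithmetic $r+2\geq p+3$ continues to apply.
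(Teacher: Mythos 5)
Your proof is correct and follows exactly the paper's argument: invoke Proposition \ref{theorem1} to obtain $M_1$ from $M_0$ by surgeries of codimension at least $r+2$, observe that $p\leq r-1$ gives $r+2\geq p+3$, and then apply Theorem \ref{surgery1} (inductively along the finite sequence of surgeries) to propagate the positivity of $s_p$ from $g_0$ on $M_0$ to a metric $g_1$ on $M_1$. The only difference is that you spell out the arithmetic and the induction, which the paper leaves implicit.
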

\begin{proof}
  Let $0\leq p\leq r-1$ be as in the theorem.  Proposition
  \ref{theorem1} asserts that the manifold $M_1$ can be obtained from
  $M_0$ using surgeries of codimensions $\geq r+2\geq p+3$.  Since the
  manifold $M_0$ is supposed to have positive $p$-curvature then
  Theorem \ref{surgery1} shows that $M_1$ carries as well a metric
  with $s_p>0$.
\end{proof}
Note that we recover Theorem B of \cite{GL1} about the scalar
curvature (that is the $0$-curvature) when $p=0, r=1$ and
$\ell=4$. Lemma 4.2 of \cite{Labbi1} about the $1$-curvature is
obtained for for $p=1, r=2$ and $\ell=4$.Theorems B and B' of this
paper are respectively obtained for $\ell =8$ and $\ell=9$.
\begin{remarks}
\begin{itemize}
\item[(1)] Similar  results hold if we replace
    positive $2$-curvature by positive $h_4$-scalar curvature,
    for instance:
{\it A compact $3$-connected string manifold of dimension $n\geq 9$
    that is string cobordant to a manifold of positive $h_4$ has a
    metric with positive $h_4$ curvature.}
\item[(2)]  Let $n=11$ or $n=13$. Since in these particular dimensions string
    $n$-manifolds are known to be cobordant to zero we conclude that a
    compact $3$-connected string $n$-manifold always has a metric with
    positive $2$-curvature and a metric with $h_4$ positive. Similar
    results hold for the $p$-curvatures for $p\leq 5$ as above.
\end{itemize}
\end{remarks}
\subsection{Genera for string manifolds and positive curvature}
Recall that the string cobordism ring
$\Omega^{\str}_*=\bigoplus_{n\geq 0} \Omega_n^{\str}$ is is the ring
whose elements of order $n$ are string-cobordism classes of
$n$-dimensional string manifolds, the addition operation is given by
the disjoint union of manifolds and product operation is given by the
Cartesian product of manifolds.

Let $I_1$ (resp. $I_2$, $I_3$) denote the subset of $\Omega^{\str}_*$
which consists of bordism classes containing representatives with
positive $2$-curvature (resp. positive $h_4$, positive $h_4$ and
positive $s_2$). Since the cartesian product of a manifold of positive
$2$-curvature (resp. positive $h_4$, positive $h_4$ and positive
$s_2$) with an arbitrary manifold has positive $2$-curvature
(resp. positive $h_4$, positive $h_4$ and positive $s_2$), we conclude
that $I_1$ (resp. $I_2$, $I_3$) is an ideal of $\Omega^{\str}_*$. We
therefore get the following three genera (ring homomorphisms):
\begin{equation}
\Pi_i: \Omega^{\str}_*\rightarrow \Omega^{\str}_*/I_i,
\end{equation}
for $i=1,2,3$. A natural question arises at this level: Are the
previous three (geometrical) genera topological genera? Are they for
instance related to Witten genus?

Recall that for a string manifold $N$, the Witten genus, denoted
$\phi_W(N)$, is a modular form for $SL_2(\Z)$ with integer
coefficients.  In particular, the Witten genus $\phi_W$ defines a ring
homomorphism from the bordism ring $\Omega^{\str}_*$ to the ring of
integral modular forms for $SL_2(\Z)$, see the next section for more
details.

We shall prove in section \ref{sec-app} that $\Ker\ \phi_W \otimes
\Q\subset I_i$ for $i=1,2,3$. It remains an open question to decide
whether the previous inclusion is in fact an equality, that is a
vanishing theorem of Lichn\'erowicz type: If $N$ is a string manifold
of positive $2$-curvature (resp. positive $h_4$, positive $h_4$ and
positive $s_2$) then $\phi_W(N) = 0.$

An important question in the same direction is Stolz's conjecture
\cite{SSS}:
\begin{conj-s}
 If $N$ is a string manifold and admits a positive Ricci
curvature metric, then $\phi_W(N) = 0.$
\end{conj-s}
At this time no counterexamples are known to this conjecture and the
conjecture is proven to be true for some classes. However, these
classes admit also metrics with different positivity properties, for
instance metrics with positive $p$-curvature, and so it may be
possible that the Stolz conjecture is true for positive $p$-curvature.

Let us note here that all the known constructions of string manifolds
with positive $p$-curvature through group actions and Riemannian
submersions \cite{Labbigroups}, or through surgeries \cite{Labbi1}
have vanishing Witten genus. This is a consequence of a result due to
Dessai, H\"ohn and Liu \cite{ Dessai,Liu} where they prove the
vanishing of the Witten genus on connected string manifolds with
non-trivial smooth $S^3$-actions and of another related result of
Dessai \cite{Dessai}. The later asserts the vanishing of the Witten
genus on any smooth fibre bundle of closed oriented manifolds provided
the fibre is a string manifold and the structure group is a compact
connected semi-simple Lie group which acts smoothly and non-trivially
on the fibre.
\section{Witten genus and its kernel in $\Omega^{\str}_*\otimes
  \Q$}\label{witten-genus}
\subsection{Cayley projective plane} Here we recall necessary facts
concerning the Cayley projective plane $\Ca\P^2$. We follow the
constructions due to A. Dessai \cite{Dessai}.  Let $F_4$ denote the
$52$-dimensional compact simple sporadic Lie group. It is well-known
that $F_4$ contains a group isomorphic to $Spin(9)$ which is unique up
to inner automorphism of the ambient group $F_4$. We choose such a
subgroup and identify it with $Spin(9)$. Then we can identify the
Cayley projective plane $\Ca\P^2$ with the homogeneous space
$F_4/Spin(9)$. This is $7$-connected smooth manifold with the
cohomology ring $H^*(\Ca\P^2;\Z)\cong \Z[z]/z^3$, where $z\in
H^8(\Ca\P^2;\Z)$ is a generator.  In particular, $\Ca\P^2$ is a fiber
of the fiber bundle
$$
BSpin(9)\to BF_4
$$
induced by the embedding $Spin(9)\subset F_4$. The bundle $BSpin(9)\to
BF_4$ is a universal {\sl geometric $\Ca\P^2$-bundle}. 

Let $L$ be a smooth manifold, $\dim L=\ell$, and $f : L \to BF_4$ be a
map. Then one obtains the induced bundle with the fiber $\Ca\P^2$ and
structure group $F_4$:
\begin{equation*}
\begin{diagram}
\setlength{\dgARROWLENGTH}{1.5em}
\node{W}
     \arrow[2]{e,t}{f^*}
     \arrow{s,r}{\pi}
\node[2]{BSpin(9)}
     \arrow{s}
\\
\node{L}
     \arrow[2]{e,t}{f}
\node[2]{BF_4}
\end{diagram}
\end{equation*}
Let $T_{\spin}\subset Spin(9)$ be the maximal torus covering the
maximal torus $T_{\so}\subset SO(9)$. It is convenient to choose a
basis $\hat\xi_1,\hat\xi_2,\hat\xi_3,\hat\xi_4$ of the Lie algebra of
$T_{\so}$ which is also a basis for the Lie algebra of
$T_{\spin}$. Then the integral lattice in $\R^4$ which provides the
universal cover of the torus $T_{\spin}$ is given by
$a_1\hat\xi_1+\cdots+a_4\hat\xi_4$, where the sum $a_1+\cdots+a_4$ of
integers is even. Let $\hat\xi$ be a generator of the Lie algebra of
$S^1$, and $v: S^1\to T_{\spin}$ be such a map for which the
differential $dv$ takes $\hat\xi$ to $2\hat\xi_1$. Then the
composition
$$
\hat v : S^1\st{v}{\lra}T_{\spin}\lra Spin(9)\lra F_4
$$
induces a map $B\hat v : BS^1 \to BF_4$. We obtain
the following diagram of fiber bundles:
\begin{equation*}
\begin{diagram}
\setlength{\dgARROWLENGTH}{1.5em}
\node{E}
     \arrow[2]{e,t}{(B\hat v)^*}
     \arrow{s,r}{\pi}
\node[2]{BSpin(9)}
     \arrow{s}
\\
\node{BS^1}
     \arrow[2]{e,t}{B\hat v}
\node[2]{BF_4}
\end{diagram}
\end{equation*}
where the bundle $\pi: E\to BS^1$ has the fiber $\Ca\P^2$ and the structure
group is reduced from the group $F_4$ to its subgroup $v(S^1)\subset F_4$. 

Then one can choose a subgroup $S^3$ of the centralizer of the group
$v(S^1)$ in $F_4$ so that $S^3$ acts nontrivially on the orbit space
$\Ca\P^2=F_4/Spin(9)$. A particular choice is given by the subgroup
$S^3\cong Spin(3)\subset Spin(9)$ which covers the subgroup 
$$
\left(
\begin{array}{c|c}
1 & 0
\\  \hline
0 & SO(3)
\end{array}
\right) \subset SO(9)
$$
under the canonical covering map $Spin(9)\to SO(9)$. Then the subgroup
$S^3$ commutes with the structure group $v(S^1)$ of the fiber bundle
$\pi: E\to BS^1$.  Thus we obtain a nontrivial action of $S^3$ along
the fibers $\Ca\P^2=F_4/Spin(9)$ of the total space $E$. 

Assume that a map $f:L \to BF_4$ is given by a composition $L\st{h}{\lra} BS^1
\st{Bv}{\lra} BF_4$. Then the geometric $\Ca\P^2$-bundle $W\to L$ is
given by the diagram of fiber bundles
\begin{equation}\label{induced}
\begin{diagram}
\setlength{\dgARROWLENGTH}{1.5em}
\node{W}
     \arrow[2]{e,t}{h^*}
     \arrow{s}
\node[2]{E}
     \arrow[2]{e,t}{(B\hat v)^*}
     \arrow{s,r}{\pi}
\node[2]{BSpin(9)}
     \arrow{s}
\\
\node{L}
     \arrow[2]{e,t}{h}
\node[2]{BS^1}
     \arrow[2]{e,t}{B\hat v}
\node[2]{BF_4}
\end{diagram}
\end{equation}
In particular, the structure group of the bundle $W\to L$ is reduced
to $v(S^1)\subset F_4$, and there is a non-trivial fiber-wise action
of $S^3$ on $W$.  This construction leads to the following result:
\begin{proposition}\label{Dessai-1}{\rm (A. Dessai, \cite[Proposition
    5.2]{Dessai})} There exist oriented manifolds $M^{4k}$ such that 
$$
\Omega_*^{\so}\otimes \Q \cong \Q[x_4,\ldots,x_{4k},\ldots],
$$
where $x_{4k}=[M^{4k}]_{\so}$, $k=1,2,\ldots$, and 
$M^{4k}$ satisfy the following conditions:
\begin{enumerate}
\item[(a)] $M^{4k}$ is a simply connected spin manifold for all $k\geq 1$;
\item[(b)] $M^{4k}$ is a string manifold for all $k\geq 2$;
\item[(c)] $M^{4k}$ is the total space of a geometric $\Ca\P^2$-bundle
  with structure group $S^1$ and non-trivial $S^3$-action along the
  fibers for $k\geq 4$.
\end{enumerate}
\end{proposition}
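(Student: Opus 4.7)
The plan rests on a classical theorem (Thom--Milnor--Novikov): $\Omega^{\so}_* \otimes \Q \cong \Q[y_4, y_8, \ldots]$ is a polynomial ring with one generator in each degree $4k$, and a sequence of classes $\{[M^{4k}]\}$ forms a polynomial generating set if and only if the Newton characteristic number $\mathfrak{s}_k[M^{4k}]$ is nonzero for every $k$; here $\mathfrak{s}_k$ denotes the $k$-th Newton polynomial in the Pontryagin classes (not to be confused with the paper's $p$-curvature $s_p$). The task therefore reduces to constructing, for each $k \geq 1$, a manifold $M^{4k}$ with the prescribed topological refinements (simply connected spin for all $k$; string for $k \geq 2$; geometric $\Ca\P^2$-bundle with structure group $v(S^1)$ and nontrivial fibrewise $S^3$-action for $k \geq 4$) whose Newton number is nonzero.

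For the low-dimensional cases one argues directly. In dimension $4$, the K3 surface is simply connected spin with nonzero signature, hence nonzero $\mathfrak{s}_1$. In dimensions $8$ and $12$ one can take total spaces of Stolz's geometric $\H\P^2$-bundles over appropriate simply connected spin bases, or quaternionic-K\"ahler examples; these are simply connected string, and their Newton numbers can be computed from the bundle data and shown to be nonzero.

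For $k \geq 4$ the manifold $M^{4k} = W^{4k}$ is built via diagram~(\ref{induced}). Inductively, choose a string manifold $L^{4k-16}$ representing a generator of $\Omega^{\str}_{4k-16} \otimes \Q$ (equivalently, with nonzero $\mathfrak{s}_{k-4}$) together with a complex line bundle $\lambda \to L$ classified by a map $h\colon L \to BS^1$, and set $f := B\hat v \circ h$. The resulting geometric $\Ca\P^2$-bundle $\pi\colon W \to L$ has structure group reduced to $v(S^1)$, and the subgroup $S^3 \subset Spin(9)$ commuting with $v(S^1)$ supplies the nontrivial fibrewise $S^3$-action. Simple connectivity and the spin condition pass to the total space because $\Ca\P^2$ is $7$-connected spin. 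The string condition follows from $p_1(TW) = p_1(T_F W) + \pi^* p_1(TL)$ together with $p_1(\Ca\P^2) = 0$: the class $p_1(T_F W)$ is determined by the isotropy representation of $v(S^1) \subset Spin(9)$ on $T_{eSpin(9)}\Ca\P^2 \cong \R^{16}$ and turns out to be proportional to $c_1(\lambda)^2$, so for string $L$ one adjusts $\lambda$ (multiplying by a suitable power of some auxiliary bundle) to arrange that this vanishes.

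The principal obstacle is to show $\mathfrak{s}_k[W] \neq 0$. By Leray--Hirsch (applicable since $H^*(\Ca\P^2;\Z) = \Z[z]/z^3$ is torsion-free and concentrated in even degrees), $H^*(W)$ is a free module over $H^*(L)$ with basis $1, z, z^2$. The Pontryagin classes of the vertical tangent bundle $T_F W$ decompose in this basis into contributions from the isotropy representation, yielding terms polynomial in $c_1(\lambda)$ with coefficients in $H^*(\Ca\P^2)$. Integration over the fibre then produces
\begin{equation*}
\mathfrak{s}_k[W] \;=\; \mathfrak{s}_4[\Ca\P^2] \cdot \mathfrak{s}_{k-4}[L] \;+\; (\textup{correction polynomial in } c_1(\lambda)).
\end{equation*}
The classical nonvanishing $\mathfrak{s}_4[\Ca\P^2] \neq 0$ (consequence of $p_2(\Ca\P^2) = 6z$ with $z$ a generator of $H^8(\Ca\P^2)$), combined with the freedom to rescale $\lambda$ or to modify $L$ inside its rational cobordism class, ensures that the sum can be made nonzero. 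This Pontryagin-number computation --- a careful tracking of equivariant characteristic classes under the $v(S^1)$-action on $\Ca\P^2$ --- is the main calculation; the remainder is standard bundle bookkeeping.
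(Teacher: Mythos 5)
The strategy you outline---invoke the Thom--Milnor--Novikov structure
theorem, reduce to nonvanishing of the Newton number
$\mathfrak{s}_k[M^{4k}]$, and build $M^{4k}$ for large $k$ as a
$\Ca\P^2$-bundle whose fiberwise Pontryagin data are controlled by
$c_1(\lambda)$---is the correct skeleton, and it is indeed what Dessai
does. But two of your concrete choices break down and are not what the
paper (or Dessai) uses.

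First, for $M^8$ and $M^{12}$ you propose total spaces of Stolz's
geometric $\H\P^2$-bundles (or quaternionic-K\"ahler spaces) and assert
they are string. They are not: $p_1(\H\P^2)\neq 0$, so $\H\P^2$ itself
is not string, and a nontrivial $\H\P^2$-fiber forces $p_1\neq 0$ on the
total space. Quaternionic-K\"ahler examples likewise have
$p_1\neq 0$. Dessai instead takes almost parallelizable manifolds with
nonvanishing top Pontryagin number, built by Kervaire--Milnor plumbing.
Almost parallelizability makes $p_1$ (and all lower structural
obstructions) vanish automatically, so these are string essentially by
construction, and their Newton number is the nonzero top Pontryagin
number.

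Second, your inductive step requires a string base $L^{4k-16}$ with
$\mathfrak{s}_{k-4}[L]\neq 0$, and then you adjust $\lambda$ so that
$p_1(T_FW)$, which you correctly identify as proportional to
$c_1(\lambda)^2$, vanishes. But if both $p_1(L)=0$ and
$c_1(\lambda)^2=0$ rationally, your correction term dies and you are
left needing $\mathfrak{s}_{k-4}[L]\neq 0$. For $k=5$ this is
impossible: $\Omega^{\str}_4=0$, and indeed any $4$-dimensional spin
manifold with $p_1=0$ has zero signature, hence $\mathfrak{s}_1=0$. The
point you are missing is that the base is \emph{not} string in
Dessai's (and the paper's) construction. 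He takes $L_{k-4}$ to be a
complete intersection in $\C\P^{2k-4}$ of complex dimension $2k-8$,
which has $p_1(L)\neq 0$ but has $p_1$ lying in the line $\Q\cdot c^2$,
where $c$ is the hyperplane class. The twist by $a_{k-4}c$ then
contributes $a_{k-4}^2 c^2$ (times a fixed nonzero constant coming from
the isotropy representation of $v(S^1)$) to $p_1(T_FW)$, and
$a_{k-4}$ is chosen so that this \emph{cancels} $\pi^*p_1(TL)$ rather
than independently vanishing. The same parameter $a_{k-4}$ is then what
keeps the Newton number nonzero. Reading your proof uncharitably, this
is not a cosmetic slip but the crux of the construction: the freedom to
choose a non-string base with a known $p_1$ and a matching twist is
exactly what makes the total space string while preserving a nonzero
characteristic number.
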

We recall key points on the construction of manifolds $M^{4k}$ given
by A. Dessai \cite{Dessai}. The manifold $M^4$ can be chosen as the
$K_3$-surface given by the quartic $x_0^4+\cdots+x_3^4=0$ in $\C\P^3$,
and the manifolds $M^8$ and $M^{12}$ as almost parallelizable
manifolds with non-vanishing top Pontyagin class. These manifolds can
be constructed by means of plumbing, see \cite{KM}. The manifold
$M^{16}=\Ca\P^2$ is the Cayley projective plane. For $k\geq 5$, the
manifold $M^{4k}$ can be chosen as the total space of geometric
$\Ca\P^2$-bundle over a complete intersection $L_{k-4}\subset
\C\P^{2k-4}$ of complex dimension $2k-8$. The manifold $L_{k-4}$ comes
together with a nontrivial class $c\in H^2(L_{k-4};\Z)$ which is the
first Chern class of the restriction of the dual Hopf bundle over
$\C\P^{2k-4}$. Then for carefully chosen integer $a_{k-4}$, the class
$a_{k-4}c\in H^2(L_{k-4};\Z)$ gives a map $h_k: L_{k-4}\to BS^1$ such
that the induced bundle $W\to L_{k-4}$ given by (\ref{induced}), where
we let $L=L_{k-4}$ and $h=h_k$, is a geometric $\Ca\P^2$-bundle with
structure group $S^1$ and non-trivial $S^3$-action along the fibers.

Let $M$ be a $\spin$ manifold, $\dim M=4k$. Then the $\hat{A}(M)$ is
well-defined and coincides with the index of the standard Dirac
operator on $M$. For any real vector bundle $V$ over $M$, we denote by
$\hat{A}(M;V)$ the index of the Dirac operator on $M$ twisted by the
complexified vector bundle $V\otimes \C$. A {\sl total symmetric
  power} $S_t(V)$ of a vector bundle $V$ is given as a series
$$
S_t(V) := 1+ S^1(V)t+S^2(V)t^2+\cdots \ ,
$$
where $S^j(V)$ is the $j$-th symmetric power of $V$ and $t$ is an
indeterminate varaible. Consider the tensor product
$$
{\mathbb S}(V):=\bigotimes_{m=1}^{\infty} S_{q^m}(V) = 1 + V
q+\left(S^2(V)\oplus V\right)q^2+\left(S^3(V)\oplus (V\otimes
  V)\oplus V\right) q^3+\cdots,
$$
see \cite[Section 2]{Stolz}. Then the {\sl Witten genus} $\phi_W(M)$
(where $\dim M=4k$) is defined as the series
$$
\begin{array}{lcl}
  \phi_W(M)\!\!\!  & = & \!\!\! \displaystyle
  \hat{A}(M;{\mathbb S}(V)) \cdot
  \prod_{n=1}^{\infty}(1-q^n)^{4k} 
  \\
  \\
  & = & \!\!\! \displaystyle
  \left(1\! +\! V
    q\!+\!\left(S^2(V)\oplus V\right) q^2\!+\!
    \cdots\right)\!\cdot\!
  \prod_{n=1}^{\infty}(1-q^n)^{4k} \ , 
\end{array}
$$
see \cite[Section 2]{Stolz} or \cite[Section 2]{Dessai}. It is easy to
see that $\phi_W(M) \in \Z[[q]]$, and $\hat{A}(M)$ is the constant
term of the series $\phi_W(M)$. In particular, the Witten genus
defines the homomorphism
$$
\phi_W : \Omega_*^{\str}\lra \Z[[q]].
$$
Proposition \ref{Dessai-1} implies the following result
\begin{corollary}\label{Dessai-2} {\rm (A. Dessai, \cite{Dessai})}
\begin{enumerate}
\item[{\bf (1)}] There is an isomorphism $
  \Omega^{\str}_*\otimes\Q\cong
  \Q[x_8,x_{12},x_{16},\ldots,x_{4k},\ldots], $ where
  $x_{4k}=[M^{4k}]_{\str}$, and the string manifolds $M^{4k}$ are as
  in Proposition {\rm \ref{Dessai-1}.}
\item[{\bf (2)}] The kernal $(\Ker\ \phi_W)\otimes \Q\subset
  \Omega^{\str}_*\otimes\Q$ coincides with the ideal generated by the
  elements $x_{4k}$, $k\geq 4$.
\item[{\bf (3)}] If $x\in \Ker\ \phi_W\subset\Omega^{\str}_*$, then
  some multiple of $x$ can be realized as the total space of a geometric
  $\Ca\P^2$-bundle.
\end{enumerate}
\end{corollary}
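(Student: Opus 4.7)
\emph{Part (1).} The plan is to combine Proposition \ref{Dessai-1} with a dimension count in the rational string-cobordism ring. Since $M^{4k}$ is string for $k\geq 2$, sending $x_{4k}\mapsto[M^{4k}]_{\str}$ defines a graded ring homomorphism $\Q[x_8,x_{12},x_{16},\ldots]\to\Omega^{\str}_*\otimes\Q$. To show this is an isomorphism, I would exploit the fact that rationally the only characteristic class obstructing a lift from a spin- to a string-structure is $\frac{1}{2}p_1$, so that the rational cohomology of $BO\!\left<8\right>$ is the polynomial algebra on the Pontryagin classes $p_k$, $k\geq 2$. The Thom--Pontryagin construction then identifies $\Omega^{\str}_*\otimes\Q$ with a polynomial algebra having one generator in each degree $8,12,16,\ldots$. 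Using Proposition \ref{Dessai-1} together with the forgetful map $\Omega^{\str}_*\otimes\Q\to\Omega^{\so}_*\otimes\Q$ and the fact that $[M^{4k}]_{\so}$ are polynomial generators there, one checks that the classes $[M^{4k}]_{\str}$, $k\geq 2$, are also polynomial generators.

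\emph{Part (2).} I would first verify that the ideal $I:=(x_{4k}:k\geq 4)\subset\Omega^{\str}_*\otimes\Q$ lies in $(\Ker\phi_W)\otimes\Q$. This uses the vanishing theorem of Dessai--H\"ohn--Liu recalled in the text: if a string manifold is the total space of a smooth fibre bundle of closed oriented manifolds whose structure group is a compact connected semi-simple Lie group acting non-trivially on the (string) fibre, then its Witten genus vanishes. By the construction of Proposition \ref{Dessai-1}(c), for $k\geq 4$ the manifold $M^{4k}$ is a geometric $\Ca\P^2$-bundle admitting a non-trivial fibrewise $S^3$-action, so $\phi_W(x_{4k})=0$. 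For the reverse inclusion, I would use that the Witten genus of a $4k$-dimensional string manifold is a level-one modular form of weight $2k$ for $SL_2(\Z)$; rationally its target is $\Q[E_4,E_6]$ with $|E_4|=8$ and $|E_6|=12$. Modulo $I$ the source reduces to $\Q[x_8,x_{12}]$, and it suffices to show that $\phi_W(x_8)$ and $\phi_W(x_{12})$ are algebraically independent generators of $\Q[E_4,E_6]$. Since the space of cusp forms vanishes below weight $12$, any weight-$4$ (resp. weight-$6$) modular form with non-zero constant term is a non-zero multiple of $E_4$ (resp. $E_6$); thus the claim reduces to the non-vanishing of the $q^0$-terms $\hat{A}(M^8)$ and $\hat{A}(M^{12})$, which follows from the plumbed almost parallelizable construction with non-zero top Pontryagin class.

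\emph{Part (3).} This follows formally from (2) by unwinding representatives. Given $x\in\Ker\phi_W\subset\Omega^{\str}_*$, part (2) yields a non-zero integer $N$ and a decomposition $Nx=\sum_j c_j[Y_j]\cdot[M^{4k_j}]_{\str}$ in $\Omega^{\str}_*$, with $c_j\in\Z$, string manifolds $Y_j$, and $k_j\geq 4$. For each summand the Cartesian product $Y_j\times M^{4k_j}$ inherits the structure of a geometric $\Ca\P^2$-bundle over $Y_j\times L_{k_j-4}$ by pulling back the defining bundle of $M^{4k_j}$ along the projection to $L_{k_j-4}$ (the structure group remaining $S^1\subset F_4$). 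Forming a signed disjoint union of these total spaces realises $Nx$ as the string-cobordism class of a geometric $\Ca\P^2$-bundle.

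\emph{Main obstacle.} The principal difficulty lies in the modular-forms step of part (2), namely verifying that $\phi_W$ restricts to an isomorphism $\Q[x_8,x_{12}]\cong\Q[E_4,E_6]$. This requires both an explicit non-vanishing computation of $\hat{A}(M^8)$ and $\hat{A}(M^{12})$ and a structural input, namely that the image of $\phi_W$ lies in the ring of level-one modular forms for $SL_2(\Z)$; the latter is a deeper piece of the string-orientation story that we would cite rather than reproduce.
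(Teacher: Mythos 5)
Your argument is correct, and it essentially reconstructs the proof from Dessai's paper \cite{Dessai}, which Botvinnik--Labbi cite without reproducing: in the text the Corollary is simply presented as a consequence of Proposition \ref{Dessai-1}, and the only commentary the authors add after the statement --- that $\hat A(M^8)$ and $\hat A(M^{12})$ are non-zero because $M^8$, $M^{12}$ are almost parallelizable plumbings with non-vanishing top Pontryagin number --- is exactly the non-vanishing input your part (2) rests on. Your part (1) (compare rational Thom--Pontryagin computation of $\Omega^{\str}_*\otimes\Q$ with indecomposability detected through the forgetful map to $\Omega^{\so}_*\otimes\Q$), your part (2) (vanishing via the Dessai--H\"ohn--Liu theorem on $S^3$-actions for the ideal inclusion, and the injectivity of the induced map $\Q[x_8,x_{12}]\to\Q[E_4,E_6]$ using that $M_4(SL_2(\Z))$ and $M_6(SL_2(\Z))$ are one-dimensional with no cusp forms), and your part (3) (product bundles and signed disjoint unions) are all sound and in the spirit of the cited source.

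Two small points of care you should make explicit. First, in part (3), the passage from the rational equality of part (2) to an integral one needs the standard observation that $-\otimes\Q$ kills torsion: if $x\otimes 1$ lies in $I\otimes\Q$, then after clearing denominators one has $N_1 x - \sum n_j a_j x_{4k_j}$ torsion, and a further multiple $N_2$ annihilates it, so the multiple $N=N_1N_2$ does the job; as stated you jump directly to an integral decomposition. Second, part (2) tacitly uses that $\phi_W$ is a ring homomorphism when passing from $\phi_W(x_{4k})=0$ for $k\ge 4$ to vanishing on the full ideal $I$; this multiplicativity is standard but worth recording since it is what makes ``ideal'' the right notion here. Neither affects the correctness of the argument.
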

We emphasize the the string manifolds $M^8$ and $M^{12}$ have
non-trivial Witten genus just because $\hat A(M^8)$ and $\hat
A(M^{12})$ are non-zero by construction. 
%
\section{Further Applications: Proof of Theorem A$'$ and  Corollary B}\label{sec-app}

The previous corollary \ref{Dessai-2} asserts in particular that if
$N$ is a string manifold with vanishing Witten genus then a non-zero
multiple of $N$ is string cobordant to a string manifold which is the
total space of a $\Ca\P^2$ bundle with structure group $S^1$ and
non-trivial $S^3$-action along the fibres.

On the other hand the Cayley projective plane $\Ca\P^2$ has dimension
$16$ and positive sectional curvature. In particular, using a result
of \cite{Labbigroups}, the total spaces of $\Ca\P^2$ bundles have
positive $p$-curvature for $0\leq p\leq 14$ (and as well positive
$h_4$-curvature). Corollary B results therefore immediately from
Theorem B.

Next, we prove Theorem A$'$.  Let $N$ be a $7$-connected and
non-Fivebrane compact manifold of dimension  $\geq 15$. Assume that $N$
is string-cobordant to a manifold $M$ which carries a metric with
positive $6$-curvature. Proposition \ref{theorem1bisbis} shows that
the manifold $N$ can then be obtained by performing surgeries on $M$
of codimension $\geq 9\geq 6+3$. Theorem \ref{surgery1} implies then
that $N$ carries a metric of positive $6$-curvature.

Finally, the manifold $N$ is $7$-connected so it is a string manifold.
Since the Witten genus of $N$ is zero then by corollary
\ref{Dessai-2},  a non-zero multiple of $N$ is string cobordant to
a string manifold which is the total space of a $\Ca\P^2$ bundle. As
above the total spaces of $\Ca\P^2$ bundles have positive
$6$-curvature, we deduce therefore from the first part of this theorem
that some multiple $N\sharp ...\sharp N$ carries a metric of positive
$6$-curvature.


\bibliographystyle{amsplain}

\address{Boris Botvinnik\\
305 Fenton Hall, Department of Mathematics,\\
University of Oregon,\\ 
Eugene OR 97403-1222, U.S.A.}\\
\email{botvinn@math.uoregon.edu}
\urladdr{http://pages.uoregon.edu/botvinn/}

\address{Mohammed Larbi Labbi\\
Mathematics Department, College of Science\\
University of Bahrain\\
32038 Bahrain.}\\
\email{labbi@sci.uob.bh}
\urladdr{http://sites.google.com/site/mllabbi/}

\end{document}